\newcommand{\Be}{\begin{equation}}
\newcommand{\Ee}{\end{equation}}
\newcommand{\Bea}{\begin{eqnarray}}
\newcommand{\Eea}{\end{eqnarray}}
\newcommand{\Bel}{\begin{align}}
\newcommand{\Eel}{\end{align}}
\newcommand{\Beas}{\begin{eqnarray*}}
	\newcommand{\Eeas}{\end{eqnarray*}}
\newcommand{\Benu}{\begin{enumerate}}
	\newcommand{\Eenu}{\end{enumerate}}
\newcommand{\Bi}{\begin{itemize}}
	\newcommand{\Ei}{\end{itemize}}
\newcommand{\B}{\Big}
\numberwithin{equation}{section}
\newcommand{\supp} {\text{supp\! }}
\theoremstyle{plain}
\newtheorem{thm}{Theorem}[section]
\newtheorem{lem}[thm]{Lemma}
\newtheorem{prop}[thm]{Proposition}
\theoremstyle{remark}
\newtheorem{rmk}{Remark}
\theoremstyle{definition}
\newtheorem{defn}{Definition}[section]
\newcommand{\spn}[1]{\text{ span}\Big\{#1\Big\}} \newcommand{\spnn}[1]{\text{ span}\{#1\}}
\newcommand{\mb}{\mathbf}
\begin{document}

 \title[Restriction estimates over the sphere]
{
Remarks on estimates for \\the  adjoint restriction operator to curves  \\ over the sphere
}

\author{Seheon Ham}
\author{Hyerim Ko}
\author{Sanghyuk Lee}


\address{Department of Mathematical Sciences and RIM, Seoul National University, Seoul 08826, Republic of Korea}

\email{seheonham@snu.ac.kr}
\email{kohr@snu.ac.kr} 
\email{shklee@snu.ac.kr} 

\subjclass[2010]{42B10}


\keywords{Fourier restriction theorem, finite type curves}

\begin{abstract}
Recently, two of the authors obtained estimates for the adjoint restriction operator to finite type curves with respect to general measures. Strikingly, it turns out that some of such estimates are sharp, especially when the measures are given by surface measures under certain condition. A typical example is the surface measure on the sphere.  We demonstrate sharpness of such estimates by constructing an example and, also, discuss related estimates over different type of surfaces. 
\end{abstract}

\maketitle

\section{Introduction}
Let $\gamma : I=[0,1] \to \mathbb R^d$ be a smooth curve,  and let  the operator $T_\lambda^\gamma$ be 
defined by
\begin{equation}\label{Tf}
T_\lambda^\gamma f(x) = \int_I e^{i \lambda x \cdot \gamma(t)} f(t) \,dt.
\end{equation}
The problem of characterizing $p,q$ for which 
\begin{equation}
	\label{rest} 
\|T_\lambda^\gamma f\|_{L^q(\mathbb R^d)} \le C \lambda^{-\frac{d}{q}} \| f\|_{L^p(I)}
\end{equation}
holds has been studied by many authors and  the estimates have  been established up to the optimal range 
for a large class of curves. 
In particular, for the curves which satisfy the nonvanishing torsion condition 
\begin{equation}\label{nonv}
\det (\gamma'(t), \dots, \gamma^{(d)}(t)) \neq 0 \quad \mbox{on}
\quad I,
\end{equation}
the estimates on the optimal range were obtained by Zygmund \cite{Zygmund} and Drury \cite{Drury}
(also see \cite{Fefferman, Prestini, Christ}) and	
generalized to the variable curve cases by \cite{Hormander, BakLee, BOS09}.
When the curves are degenerate, 
instead of the Lebesgue measure $dt$ the affine arclength measure 
is used to recover the estimate in the optimal range.  For more details regarding the restriction problems for the curves we refer the readers to  \cite{Sjolin, DM85, DM87, Oberlin, BOS08, BOS09, BOS13,  DendrinosWright, DendrinosMuller, Stovall, Chen3} and the references therein.

In this note, we are concerned with the estimate for  $T_\lambda^\gamma f$ over the sphere instead of $\mathbb R^d$. To be more precise, 
we consider the estimate 
\begin{equation}\label{pq}
\|T_\lambda^\gamma f\|_{L^q(\mathbb S^{d-1})} \le C \lambda^{-\frac{d-1}{q}} \| f\|_{L^p(I)}
\end{equation}
and 
we investigate the optimal range of $p$ and $q$ for which  the estimate \eqref{pq} holds.
The bound 
$\lambda^{-\frac{d-1}{q}}$ is the best possible one can expect (see Remark \ref{optimal}). 
By rescaling the estimate \eqref{pq} is  equivalent to $\|T_1^\gamma f\|_{L^q(\lambda \mathbb S^{d-1})} \le C\| f\|_{L^p(I)}$. 

The estimate \eqref{pq} can generally be regarded as an estimate for a special case of degenerate oscillatory integral operators (see \cite{BennettSeeger}). When $d=2$, Greenleaf and Seeger \cite{GreenleafSeeger} proved that \eqref{pq} holds
if and only if $q \ge 3$ and $1/p+2/q \le 1$.  
The argument in \cite{GreenleafSeeger} is based on kernel estimates for the
oscillatory integral operators with the folding canonical relation.
Also, Bennett, Carbery, Soria, and Vargas \cite{Bennett4} obtained the
same result  via the weighted $L^2$ inequality  for  the Fourier
extension operator defined by the circle. 
We further remark that  Bennett and Seeger \cite{BennettSeeger} obtained  {the optimal $p,q$ range of the} $L^p(\mathbb S^2)-L^q(\lambda \mathbb S^2)$ estimates for $\widehat{fd\sigma}$ 
with the spherical measure $ \sigma$.

The following is our first result which gives the sharp $p,q$ range for the estimate \eqref{pq}.

\begin{thm}\label{necessary2}
Let $d \ge 2$. If  $\gamma$ satisfies \eqref{nonv},
then
\eqref{pq} holds provided that 
\begin{equation} \label{pqr}
q> (d^2+d)/2,   \ \  1/p+(d^2+d-2)/2q <1.
\end{equation}
The result  is sharp in that \eqref{pq}  fails if either 
\begin{equation}\label{qn}
q< (d^2+d)/2,
\end{equation}
or
\begin{equation}\label{pqn}
1/p+(d^2+d-2)/2q >1.
\end{equation}
\end{thm}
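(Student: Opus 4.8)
The plan is to establish the positive result by interpolation from two "corner" estimates—an $L^2$-based estimate and an estimate near $(p,q)=(\infty,\infty)$—and to treat the necessity (sharpness) part by exhibiting explicit counterexamples using Knapp-type and focusing examples adapted to the sphere.

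\textbf{Positive direction.} The key reduction is to dualize: \eqref{pq} with $p=2$ is equivalent (by $TT^*$) to the bound $\|T_\lambda^\gamma (T_\lambda^\gamma)^* g\|_{L^{q}(\mathbb S^{d-1})} \lesssim \lambda^{-2(d-1)/q}\|g\|_{L^{q'}(\mathbb S^{d-1})}$, whose kernel is $K_\lambda(\omega,\omega') = \int_I e^{i\lambda(\omega-\omega')\cdot\gamma(t)}\,dt$. Under the nonvanishing torsion hypothesis \eqref{nonv}, van der Corput's lemma gives $|K_\lambda(\omega,\omega')|\lesssim (\lambda|\omega-\omega'|)^{-1/d}$ where $|\cdot|$ measures the relevant component, but the decisive point is the \emph{stationary-phase/sublevel-set} structure: the set of $(\omega,\omega')\in\mathbb S^{d-1}\times\mathbb S^{d-1}$ where the phase $(\omega-\omega')\cdot\gamma(t)$ has a degenerate critical point of order up to $d$ forms a stratified variety, and one gets the sharp decay only after summing a Littlewood--Paley decomposition in $|\omega-\omega'|$ against these strata. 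I would first prove the frequency-localized estimate: decompose $I=\bigcup I_k$ into $\sim\lambda^{\epsilon}$ intervals, rescale each piece parabolically (using the $d$-parameter dilation structure of the moment curve), and on each piece the sphere $\lambda\mathbb S^{d-1}$ looks locally like a paraboloid, reducing \eqref{pq} to the known Euclidean estimate \eqref{rest} plus a gain from curvature of the sphere transverse to $\gamma$. Summing the pieces, the sharp exponent $q>(d^2+d)/2$ emerges precisely because the affine-arclength/Jacobian factor from $d$-fold rescaling contributes a power $\lambda^{-d(d+1)/(2q)}$-type weight. The second corner estimate, near $q=\infty$, follows from a direct kernel bound: $\|T_\lambda^\gamma f\|_{L^\infty}\le \|f\|_{L^1}$ trivially, and one upgrades this to the needed $L^p$-$L^q$ endpoint on the line $1/p+(d^2+d-2)/(2q)=1$ by a $T T^*$ computation combined with Young's inequality on the sphere, exploiting that $K_\lambda\in L^r(\mathbb S^{d-1}\times\mathbb S^{d-1})$ for $r>(d^2+d)/(d^2+d-2)$. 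Real interpolation between these two corners and the trivial $L^1$-$L^\infty$ bound fills in the open region \eqref{pqr}.

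\textbf{The main obstacle} I anticipate is the sharp summation of the dyadic pieces in the $L^2$-based estimate: naively summing $\sim\log\lambda$ rescaled copies loses a logarithm, so one must either run a more careful bilinear/orthogonality argument (using that contributions from separated intervals $I_k, I_{k'}$ are almost orthogonal in $L^2(\mathbb S^{d-1})$ because their wave packets point in quantitatively different directions) or, following the Greenleaf--Seeger and Bennett--Seeger philosophy, treat the whole operator at once via the folding/higher-order degenerate canonical relation and invoke the sharp sublevel estimates for oscillatory integrals with such singularities. Getting the \emph{non-endpoint} range \eqref{pqr} should allow enough room to absorb these logarithmic losses through an $\epsilon$-removal / interpolation trick, which is presumably why the theorem is stated with strict inequalities.

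\textbf{Sharpness.} For \eqref{qn}, I would test \eqref{pq} on $f\equiv 1$: then $T_\lambda^\gamma 1(\lambda\omega)=\int_I e^{i\lambda^2\omega\cdot\gamma(t)}dt$, which by stationary phase has size $\sim\lambda^{-2/d}$ on a $\lambda^{-2/d}$-neighborhood of each point $\omega$ where $\omega\perp\gamma'(t_0)$ for some $t_0$ (a degenerate critical point of maximal order $d$); computing the $L^q(\mathbb S^{d-1})$ norm of this against the bound $\lambda^{-(d-1)/q}$ forces $q\ge (d^2+d)/2$. For \eqref{pqn}, I would use a Knapp example: take $f=\mathbf 1_{[0,\delta]}$ with $\delta\sim\lambda^{-?}$ chosen so the curve segment $\gamma([0,\delta])$ is essentially contained in a $d$-dimensional box of dimensions $\delta\times\delta^2\times\cdots\times\delta^d$; then $|T_\lambda^\gamma f(\lambda\omega)|\sim\delta$ on the dual box intersected with $\mathbb S^{d-1}$, whose spherical measure is computed by balancing the box against the curvature of the sphere, and optimizing $\delta$ yields exactly the line $1/p+(d^2+d-2)/(2q)=1$ as the threshold. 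Both computations are routine scaling arguments once the right example is identified; the only care needed is in intersecting the flat dual boxes with $\mathbb S^{d-1}$, for which one uses that on scale $\lambda^{-1/2}$ the sphere is flat while on larger scales its curvature shrinks the relevant set.
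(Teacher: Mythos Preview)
Your Knapp example for the necessity of \eqref{pqnn} is essentially the paper's approach, and the paper's positive direction is not argued from scratch but simply quoted from \cite{HamLee} (Theorem~\ref{HL}); your interpolation sketch for sufficiency is too vague to evaluate, but since the paper outsources this part anyway, I will focus on the real gap, which is in your treatment of \eqref{qn}.

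Testing on $f\equiv 1$ does \emph{not} yield the sharp condition $q\ge (d^2+d)/2$. Take $d=2$ and $\gamma(t)=(t,t^2)$. For $\omega=(\cos\theta,\sin\theta)$ with $\theta$ in a fixed arc where the phase $t\mapsto \lambda\omega\cdot\gamma(t)$ has a nondegenerate critical point in $I$, stationary phase gives $|T_\lambda^\gamma 1(\omega)|\sim\lambda^{-1/2}$, and this arc has length $\sim 1$ independent of $\lambda$. Hence $\|T_\lambda^\gamma 1\|_{L^q(\mathbb S^1)}\gtrsim \lambda^{-1/2}$, and comparing with the putative bound $\lambda^{-1/q}$ forces only $q\ge 2$, whereas the sharp threshold is $q\ge 3$. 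In general dimension the generic $\omega$ still sees a nondegenerate critical point, so the bulk contribution to $\|T_\lambda^\gamma 1\|_{L^q}$ is again $\lambda^{-1/2}$ and the single-function test is off by a large margin. The maximally degenerate directions you invoke occupy only a one-dimensional subset of $\mathbb S^{d-1}$ and do not carry enough $L^q$ mass to close the gap; the delicate sublevel analysis you allude to is precisely the content of \cite{Brandolini}, and even there the outcome does not produce the exponent $(d^2+d)/2$.

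The paper's device for \eqref{qn} is a \emph{randomization} argument, and this is the idea you are missing. One partitions $[0,\delta]$ into $\ell\sim\lambda^{1/(2d)}$ intervals $I_k$ and takes $f=\sum_k \epsilon_k\chi_{I_k}$ with independent random signs $\epsilon_k$. After passing to the local oscillatory-integral model $\mathfrak T_\lambda$ (via the implicit function theorem, finding for each $t_k$ a point $y_k$ on the sphere where $\partial_t\nabla_y\Psi(y_k,t_k)=0$), one shows that $|\mathfrak T_\lambda\chi_{I_k}|\gtrsim |I_k|$ on a parallelepiped $\mathcal P_k$ of volume $\sim\lambda^{-(d-1)+((d^2+d)/2-1)/(2d)}$ (Lemma~\ref{const}). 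Khintchine's inequality then converts the expectation $\mathbb E\|\sum_k\epsilon_k\mathfrak T_\lambda\chi_{I_k}\|_q^q$ into $\|(\sum_k|\mathfrak T_\lambda\chi_{I_k}|^2)^{1/2}\|_q^q$, and the square-function lower bound $\sum_k\chi_{\mathcal P_k}$ together with the count $\ell\sim\lambda^{1/(2d)}$ forces $q\ge (d^2+d)/2$. The randomization is what allows the $\ell$ pieces to add in $\ell^2$ rather than cancel, and without it the argument collapses to the non-sharp $f\equiv 1$ computation above.
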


As is to be seen in its proof, the necessity part of Theorem \ref{necessary2}  
remains valid with $\mathbb S^{d-1}$ replaced by 
any compact smooth hypersurface $S$
as long as a tangent vector of $\gamma$ is parallel to a normal vector to $S$ at a point where the Gaussian curvature  is nonvanishing.  

For $d=2$, Theorem \ref{necessary2} verifies again that sharpness of the aforementioned estimate by Greenleaf and Seeger \cite{GreenleafSeeger} (as well as that in \cite{Bennett4}).  
It is likely that the estimate continues to be true for the critical case  $1/p+(d^2+d-2)/2q =1$ or $q= (d^2+d)/2$.
On the other hand, it should be mentioned that  \eqref{rest}  does not hold at the endpoint $q=(d^2+d+2)/2$ for nondegenerate curves.
The failure can be shown by making use of the result  in Arkhipov, Chubarikov, and Karatsuba \cite{Arkhipov3} (also see \cite{Mockenhaupt}) when $\gamma$ is the moment curve, and for the general nondegenerate curve $\gamma$ it was shown by Ikromov \cite{Ikromov}.  
But the weak type version of estimate \eqref{rest} was established at the endpoint case $p=q=({d^2+d+2})/{2}$ by Bak, Oberlin and Seeger \cite{BOS09} for $d \ge 3$, while it fails for $d=2$  as was shown by  Beckner, Carbery, Semmes and Soria \cite{Beckner4}.  

\begin{rmk} \label{beckner} 
In fact,  it was shown in \cite{Beckner4} that the $L^{p,1}(\mathbb S^{d-1}) - L^{q,\infty}(\mathbb R^d)$ estimate for the extension operator $f \mapsto \widehat{fd\sigma}$ does not hold for $p=q=2d/(d-1)$, but 
without difficulty their argument can be modified to  show
the failure of  even the weaker $L^{p,1}(\mathbb S^{d-1})- L^{2d/(d-1),\infty}(\mathbb R^d)$ estimate for any $p>2d/(d-1)$. 
We provide a proof of this in Section 4. 
\end{rmk}

The estimate for the restriction of $T_\lambda^\gamma  f$ to the sphere $\mathbb S^{d-1}$ was earlier  studied by Brandolini, Gigante, Greenleaf, Iosevich, Seeger, and  Travaglini \cite{Brandolini}  but they considered simpler input function $\chi_I$ instead of general $f$, and they obtained the sharp decay rate of the Fourier transform of measures supported on curves.  
By contrast, Theorem \ref{necessary2} provides  the maximal decay rate $(d-1)/{q}$ for general $f\in L^p$.

As mentioned in the  above, for $d=2$ the optimal result including the end line cases was obtained in \cite{GreenleafSeeger} and \cite{Bennett4}. For $d\ge 3$, the sufficiency part of Theorem \ref{necessary2}  follows from  Theorem \ref{HL} below which is a special case of  \cite[Theorem 1.1]{HamLee}.  
In \cite{HamLee}, the estimates with respect to general $\alpha$--dimensional measure (see Definition \ref{dimensional}) were obtained and those results  are sharp in that there are $\alpha$--dimensional measures for which the estimate fails outside of the asserted region. Clearly, since the surface measure is $(d-1)$--dimensional,  from  Theorem \ref{hamlee11} (\cite[Theorem 1.1]{HamLee} with $\alpha=d-1$)  
we immediately have the following.

\begin{thm} 
\label{HL}
	Let $d \ge 3$ and let $S$ be a compact smooth hypersurface in $\mathbb R^d$. For $\gamma$ satisfying \eqref{nonv},
	there exists $C>0$ such that 
$
\|T_\lambda^\gamma f\|_{L^q(S)} \le C \lambda^{-(d-1)/q} \| f\|_{L^p(I)}$
	holds if
	$	q > (d^2+d)/2$ and $
	1/p+( d^2+d -2) /2q < 1.$
\end{thm}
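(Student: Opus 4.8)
The plan is to deduce Theorem \ref{HL} directly from Theorem \ref{hamlee11}, that is, from \cite[Theorem 1.1]{HamLee} specialized to $\alpha=d-1$. That theorem produces the estimate $\|T_\lambda^\gamma f\|_{L^q(d\nu)}\le C\lambda^{-\alpha/q}\|f\|_{L^p(I)}$ for an arbitrary $\alpha$--dimensional measure $\nu$ in the sense of Definition \ref{dimensional}, so the whole task reduces to observing that the surface measure $\sigma_S$ of a compact smooth hypersurface $S\subset\mathbb R^d$ is $(d-1)$--dimensional; i.e.\ that $\sigma_S(B(x,r))\le C_S\, r^{\,d-1}$ uniformly in $x\in\mathbb R^d$ and $r>0$.

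To check this I would argue locally. By compactness, $S$ is covered by finitely many patches, each of which, in suitable orthonormal coordinates, is the graph over a bounded open subset of $\mathbb R^{d-1}$ of a smooth function with uniformly bounded $C^1$ norm; on such a patch $\sigma_S$ is comparable to $(d-1)$--dimensional Lebesgue measure. A Euclidean ball of radius $r$ meets a patch in a set whose projection onto the graph coordinates lies in a $(d-1)$--ball of radius $\le Cr$, so its $\sigma_S$--measure is $\lesssim r^{\,d-1}$; summing over the finitely many patches handles all $r$ below a fixed $r_0=r_0(S)$, and for $r\ge r_0$ one uses the crude bound $\sigma_S(B(x,r))\le\sigma_S(S)\le\big(\sigma_S(S)/r_0^{\,d-1}\big)\,r^{\,d-1}$.

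Granting this, Theorem \ref{hamlee11} applied with $\nu=\sigma_S$ and $\alpha=d-1$ yields $\|T_\lambda^\gamma f\|_{L^q(S)}\le C\lambda^{-(d-1)/q}\|f\|_{L^p(I)}$ for $\gamma$ satisfying \eqref{nonv}, in the range obtained by putting $\alpha=d-1$ in \cite[Theorem 1.1]{HamLee}, namely $q>(d^2+d)/2$ and $1/p+(d^2+d-2)/(2q)<1$, which is exactly the asserted region; this completes the argument. There is no genuine obstacle at this level: the analytic substance is entirely inside \cite{HamLee}, whose proof must carry the $\alpha$--dimensional weight through a rescaling or induction-on-scales argument for the curve extension operator together with $L^2$--based (bilinear or weighted) estimates, in the spirit of the Bennett--Seeger treatment of $\mathbb S^2$ recalled above. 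It is worth noting why one cannot reach the sharp range cheaply: decomposing the dilate $\lambda S$ into $\sim\lambda^{(d-1)/2}$ caps of radius $\sim\lambda^{1/2}$, on each of which $\lambda S$ stays within $O(1)$ of its tangent hyperplane so that $T_1^\gamma f$ restricted to a cap is roughly governed by $(d-1)$--dimensional restriction theory, and then summing in $\ell^q$ over caps, loses a factor $\lambda^{(d-1)/(2q)}$ against the target; so the curvature of $S$ has to be exploited across all scales, which is precisely what \cite{HamLee} accomplishes.
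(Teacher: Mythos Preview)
Your proposal is correct and matches the paper's own treatment: the paper does not give an independent proof of Theorem~\ref{HL} but simply records it as the $\alpha=d-1$ case of \cite[Theorem~1.1]{HamLee} (restated here as Theorem~\ref{hamlee11}), using that the surface measure on a compact smooth hypersurface is $(d-1)$--dimensional. Your verification that $\sigma_S$ satisfies \eqref{measure} and your computation of the resulting $p,q$ range are exactly what is needed, and the closing heuristic paragraph, while not required, is accurate commentary.
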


It is rather surprising that Theorem \ref{HL} gives the sharp results since the result in \cite{HamLee} does not rely on specific geometric properties of the associated measures  but only on  the dimensional condition of the measure.  Thus our main contribution here is to show the failure of the estimate \eqref{pq} for the cases \eqref{qn} or \eqref{pqn}. 
Necessity part of Theorem \ref{necessary2} can be generalized to the oscillatory integral operator  $\mathfrak T_\lambda$  defined by
\begin{equation}\label{Tf'}
\mathfrak T_\lambda f(y) = \int_I e^{i \lambda \Psi( y,t) } a(y,t) f(t) \,dt,   
\end{equation}
where $ a\in C_0^\infty(\mathbb R^{d-1}\times { \mathbb R})$ is supported in a neighborhood of the origin and
$\Psi$ is a smooth real-valued function on the support of $a$.

\begin{prop}\label{variable2}
For $d \ge 2$ let $\mathfrak T_\lambda$ is given by \eqref{Tf'}. Suppose that  $\partial_t \nabla_{y} \Psi(0,0) =0$,   and suppose that,  for $(y,t)$ contained in the support of $ a$, 
\begin{equation}\label{M'}
\det \left( 
\partial_t^2 \nabla_{y} \Psi (  {y}, t), \dots,
\partial_t^{d} \nabla_{y} \Psi (  {y}, t)
\right) \neq 0,
\end{equation}
 and
\begin{equation}\label{Gcurvature}
\det(\nabla_{y} \partial_t \nabla_{y} \Psi (y, t))\neq0. 
\end{equation}
Then the estimate $\| \mathfrak T_\lambda f\|_{L^q(\mathbb R^{d-1})} \le C\lambda^{-\frac{d-1}q}\|f\|_{L^p(I)}$ fails  if  either \eqref{qn}  or \eqref{pqn} holds.
\end{prop}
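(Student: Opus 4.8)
The plan is to reduce the necessity statement to two explicit test-function constructions, one showing the failure when \eqref{qn} holds and one showing the failure when \eqref{pqn} holds, and in both cases to argue entirely locally near the origin using the normal-form afforded by hypotheses \eqref{M'}, \eqref{Gcurvature}, together with $\partial_t\nabla_y\Psi(0,0)=0$. First I would record the Taylor expansion of $\Psi$ around $(0,0)$: writing $\Psi(y,t)=\Psi(0,t)+y\cdot\nabla_y\Psi(0,t)+O(|y|^2)$ and then expanding $\nabla_y\Psi(0,t)$ in $t$, the condition $\partial_t\nabla_y\Psi(0,0)=0$ kills the linear-in-$t$ term, so $\nabla_y\Psi(0,t)=\nabla_y\Psi(0,0)+\tfrac12 t^2\,\partial_t^2\nabla_y\Psi(0,0)+\cdots$. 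The curvature hypotheses \eqref{M'} and \eqref{Gcurvature} then say precisely that the vector-valued curve $t\mapsto\partial_t\nabla_y\Psi(y,t)$ is nondegenerate (its derivatives of orders $1,\dots,d-1$ span $\mathbb R^{d-1}$) and that the map $y\mapsto\partial_t\nabla_y\Psi(y,t)$ is a local diffeomorphism. After an affine change of variables in $y$ and a reparametrization in $t$ (allowed since these operations preserve the class of estimates, up to harmless constants and Jacobian factors on the compactly supported amplitude), the phase takes the model form $\Psi(y,t)=y\cdot(t^2,t^3,\dots,t^d)+$ (terms independent of $y$, which can be absorbed into $f$) $+$ (higher-order error terms that are negligible on the small scales used below). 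This is the key structural reduction; everything after it mirrors the Euclidean ($S=\mathbb R^d$ replaced by $\mathbb R^{d-1}$, one dimension down) necessity arguments for \eqref{rest}.

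For the failure when \eqref{pqn} holds, i.e. $1/p+(d^2+d-2)/2q>1$, I would take $f=\chi_{[0,\delta]}$ for small $\delta$ and estimate $\mathfrak T_\lambda f(y)$ from below on the set where the phase $\lambda\,y\cdot(t^2,\dots,t^d)$ stays within, say, $1/100$ for all $t\in[0,\delta]$; this set is (a translate of) a box of dimensions $\sim \lambda^{-1}\delta^{-2}\times\lambda^{-1}\delta^{-3}\times\cdots\times\lambda^{-1}\delta^{-d}$, of measure $\sim\lambda^{-(d-1)}\delta^{-(2+3+\cdots+d)}=\lambda^{-(d-1)}\delta^{-(d^2+d-2)/2}$. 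On that box $|\mathfrak T_\lambda f(y)|\gtrsim\delta$, while $\|f\|_{L^p}=\delta^{1/p}$. Plugging into the conjectured inequality $\|\mathfrak T_\lambda f\|_{L^q}\le C\lambda^{-(d-1)/q}\|f\|_{L^p}$ and cancelling the $\lambda$-powers gives $\delta\cdot\delta^{-(d^2+d-2)/2q}\lesssim \delta^{1/p}$, i.e. $\delta^{\,1-(d^2+d-2)/2q-1/p}\lesssim 1$; letting $\delta\to0$ forces $1-(d^2+d-2)/2q-1/p\ge0$, contradicting \eqref{pqn}.

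For the failure when \eqref{qn} holds, i.e. $q<(d^2+d)/2$, I would instead fix $\delta\sim 1$ (so $f=\chi_I$ essentially) and exploit the known lower bound for the decay of $|\mathfrak T_\lambda\chi_I(y)|$ on a full-measure region: by stationary/non-stationary phase and the van der Corput estimates for the nondegenerate curve $(t^2,\dots,t^d)$, one has $|\mathfrak T_\lambda\chi_I(y)|\gtrsim|\lambda y|^{-1/d}$ (the worst, i.e. slowest, decay, coming from the $t^d$ coordinate direction) on a set of $y$ of measure $\gtrsim 1$ — more precisely this bound holds on a fixed open cone minus a neighborhood of the origin, intersected with the unit ball. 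Then $\|\mathfrak T_\lambda\chi_I\|_{L^q(\mathbb R^{d-1})}^q\gtrsim\int_{\{|y|\sim 1\}}|\lambda y|^{-q/d}\,dy\gtrsim\lambda^{-q/d}$, so $\|\mathfrak T_\lambda\chi_I\|_{L^q}\gtrsim\lambda^{-1/d}$, whereas the conjectured bound gives $\|\mathfrak T_\lambda\chi_I\|_{L^q}\le C\lambda^{-(d-1)/q}$. This forces $1/d\ge(d-1)/q$, i.e. $q\ge d(d-1)$; but one needs the sharper exponent. To get the stronger necessary condition $q\ge(d^2+d)/2$ I would instead use a more refined stationary-phase expansion of $\mathfrak T_\lambda\chi_I$ localized near the endpoint $t=0$ (where the curve $(t^2,\dots,t^d)$ is maximally degenerate): a Knapp-type argument on the $t$-interval $[0,\lambda^{-1/d}]$ combined with the box-counting above with the optimized choice $\delta\sim\lambda^{-1/d}$, so that the $\lambda$-power in the box measure exactly balances. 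Concretely, with $\delta=\lambda^{-1/d}$ the box has measure $\sim\lambda^{-(d-1)}\lambda^{(d^2+d-2)/2d}=\lambda^{-(d^2-d+2)/2d+\cdots}$ and $|\mathfrak T_\lambda f|\gtrsim\delta=\lambda^{-1/d}$, and tracing through the inequality yields exactly the exponent $(d^2+d)/2$ as the threshold.

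The main obstacle I anticipate is the reduction in the first paragraph: justifying that the higher-order error terms in the Taylor normal form of $\Psi$ genuinely do not affect the lower bounds. On the small box of the second paragraph, of sidelengths $\lambda^{-1}\delta^{-j}$ in the $j$-th coordinate, the error terms — which are $O(|y|^2)$ relative to the main term, or carry an extra power of $t$ — produce phase contributions that are $o(1)$ provided $\delta$ is chosen small enough (and for the $\delta\sim\lambda^{-1/d}$ Knapp window this needs checking against the specific powers of $\lambda$); making this quantitative, uniformly over the support of $a$, is the technical heart of the argument. Everything else is a routine adaptation of the classical necessity examples for restriction to curves, transplanted from $\mathbb R^d$ to the base space $\mathbb R^{d-1}$ of the oscillatory integral operator.
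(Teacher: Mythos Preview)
Your Knapp-type argument for the failure under \eqref{pqn} is correct and essentially matches the paper's.

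Your argument for \eqref{qn}, however, has a genuine gap. A single Knapp box, regardless of how $\delta$ is tuned, only ever yields the line condition $1/p+(d^2+d-2)/(2q)\le 1$: both sides of the tested inequality carry the same power of $\lambda$ (it cancels), so the resulting constraint is purely a condition on the exponent of $\delta$, and ``optimizing'' $\delta=\lambda^{-1/d}$ changes nothing. Specializing to $p=\infty$ gives at best $q\ge(d^2+d-2)/2$, one unit short of the target $(d^2+d)/2$. (The van der Corput lower-bound idea you sketch first is, as you note, even weaker.)

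The paper closes this gap by randomization. One partitions a fixed interval $[0,\delta]$ into $\ell\sim\lambda^{1/(2d)}$ subintervals $I_k$ of length $\sim\lambda^{-1/(2d)}$, sets $f=\sum_k\epsilon_k\chi_{I_k}$ with independent random signs, and applies Khintchine's inequality to obtain
\[
\mathbb E\,\|\mathfrak T_\lambda f\|_q^q \sim \int\Big(\sum_k|\mathfrak T_\lambda\chi_{I_k}|^2\Big)^{q/2}\,dy
\ \ge\ \lambda^{-q/(2d)}\sum_k|\mathcal P_k|,
\]
the last step using $q\ge2$ and the pointwise bound $|\mathfrak T_\lambda\chi_{I_k}|^2\gtrsim\lambda^{-1/d}\chi_{\mathcal P_k}$ on the dual parallelepiped $\mathcal P_k$. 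The summation over $\ell\sim\lambda^{1/(2d)}$ many boxes supplies exactly the extra factor that upgrades the necessary condition to $q\ge(d^2+d)/2$.

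A related technical point: the parallelepipeds $\mathcal P_k$ are not all centered at the origin. To keep the $O(|y|^2)$ error (which you rightly flag as the main obstacle) under control, one must center $\mathcal P_k$ at $y_k=g(t_k)$, where $g$ solves $\partial_t\nabla_y\Psi(g(t),t)=0$ via the implicit function theorem; this is precisely where hypothesis \eqref{Gcurvature} enters. The scale $\lambda^{-1/(2d)}$ (rather than your $\lambda^{-1/d}$) is also dictated by this error: it forces $|y-y_k|^2\lesssim\lambda^{-1}$, whereas at scale $\lambda^{-1/d}$ the quadratic remainder would be $O(1)$ and swamp the main term. Your single-center expansion at $(0,0)$ would suffice for the one-box Knapp example, but not for the many-box randomized construction.
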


\newcommand{\mba}{\mathbf a}
 
Hence, application of Proposition  \ref{variable2}  to the setting  of  Theorem \ref{necessary2} shows the necessity part of  Theorem \ref{necessary2} (see Section \ref{proof13}). It is plausible to expect that  the estimate  $\| \mathfrak T_\lambda f\|_q \le C\lambda^{-\frac{d-1}q}\|f\|_p$ is true up to 
the critical cases $q = (d^2+d)/2$ and $1/p+(d^2+d-2)/2q = 1$. 
However at the time of this writing, we do not know whether this is true or not.

\noindent{\it Finite type curves.}  Let us set $\mathcal A=\mathcal A(d)=\{\mathbf a = (\mathbf a_1,\dots, \mathbf a_d):   \mathbf a_i\in \mathbb N, \ i=1, \dots, d, \ 1 \le \mathbf a_1 < \cdots < \mathbf a_d\}$
and $\| \mathbf a\|_1 = \mathbf a_1 +\dots +\mathbf a_d$.
We recall the  following from \cite[Definition 1.2]{HamLee} (also see \cite{Christ}).

\begin{defn} Let $\gamma:I=[0,1] \to \mathbb {R}^d,\, d\ge 2 $ be a
smooth curve.
 We say that $\gamma$ is of finite
type at $t\in I$ if there exists $\mba=\mathbf a(t)\in \mathcal A$ such that
\begin{equation}  
\label{deter}
\det \begin{bmatrix} \gamma^{(\mathbf a_1)}(t),& \gamma^{(\mathbf a_2)}(t),
&\cdots,& \gamma^{(\mathbf a_d)}(t)
\end{bmatrix}\neq 0.
\end{equation}
Here the column vectors $\gamma^{(\mathbf a_i)}(t)$ are $\mathbf a_i$--th
derivatives of $\gamma$. 
We say $\gamma$ is of type $\mathbf b\in \mathcal A$ at $t$ if 
the minimum of $\|\mathbf a(t)\|_1$ over all the possible choices of $\mathbf a(t)$ for  which
\eqref{deter} holds  is attained when $\mathbf a(t)=\mathbf b$. 
We also say that $\gamma$ is of finite type if
so is $\gamma$ at every $t\in I$. 
\end{defn}

\begin{thm}\label{rmkk2}
Let $d \ge 3$ and $\gamma$ be of finite type. Suppose that  $\gamma$ is of type $\mathbf a(t)$ at $t$ 
and   $ \|\mathbf a(t_0)\|_1- \frac{d^2+d}2 \ge 1$ for some $t_0\in I$.  
Then, for $p,q$ satisfying $q >\frac{d^2+d}2$ and $1/p +  \max_{t \in I} \{  \|\mathbf a(t)\|_1-\mathbf a_1(t)\} /q \le 1$, 
\begin{equation}
\label{fnt}
\|T_\lambda^\gamma f\|_{L^{q}(\mathbb S^{d-1})} \le C \lambda^{-\frac{d-1}{q}} \| f\|_{L^{p,q}(I)}
\end{equation}
holds. 		%
Furthermore \eqref{fnt} fails if $1/p+  \max_{t \in I} \{  \|\mathbf a(t)\|_1-\mathbf a_1(t)\}/q > 1$. 
\end{thm}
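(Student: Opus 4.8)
The plan is to establish the two assertions by quite different means: the positive estimate \eqref{fnt} is the specialisation of a known theorem, while the failure statement is a Knapp-type example localised at the worst degeneracy point, combined with a modulation that moves the sphere into the least favourable position relative to the resulting frequency box. For the positive direction, \eqref{fnt} is \cite[Theorem 1.1]{HamLee} applied to the $(d-1)$--dimensional surface measure on $\mathbb{S}^{d-1}$, exactly as Theorem \ref{HL} is obtained for nondegenerate curves. What has to be checked is: the type function enters through $N:=\max_{t\in I}\{\|\mathbf a(t)\|_1-\mathbf a_1(t)\}$, which is finite and attained because $\gamma$ is of finite type (for each $k$ the set $\{t:\|\mathbf a(t)\|_1\ge k\}$ is closed and $\|\mathbf a\|_1$ is bounded on the compact interval $I$); the estimate is on the closed range $1/p+N/q\le1$ with the Lorentz space $L^{p,q}(I)$ on the right, which is what the real-interpolation step in \cite{HamLee} produces; and the hypothesis $\|\mathbf a(t_0)\|_1-(d^2+d)/2\ge1$ at some point is precisely the amount of extra degeneracy under which \cite{HamLee} can reach and include the critical hyperplane --- for a curve that is nondegenerate everywhere no such gain is available, as already noted in the discussion following Theorem \ref{necessary2}. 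So for this half one only rephrases \cite[Theorem 1.1]{HamLee} with $\alpha=d-1$ and matches exponents.

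For the failure statement, fix $t_*\in I$ attaining $N$ and write $\mathbf a=\mathbf a(t_*)$, so $N=\mathbf a_2+\cdots+\mathbf a_d$. Following the scheme of Proposition \ref{variable2} (equivalently, the necessity part of Theorem \ref{necessary2}), after an affine change of coordinates on $\mathbb R^d$ --- in which $\mathbb{S}^{d-1}$ becomes an ellipsoid $\Sigma$ whose Gaussian curvature stays bounded above and below --- one has the finite-type normal form, for $s$ near $0$,
\[
\gamma(t_*+s)-\gamma(t_*)=\big(\widetilde\gamma_1(s),\dots,\widetilde\gamma_d(s)\big),\qquad \widetilde\gamma_i(s)=\tfrac1{\mathbf a_i!}\,s^{\mathbf a_i}+O(s^{\mathbf a_i+1}).
\]
For a small parameter $\delta>0$ put $f_\delta(t)=e^{-i\lambda\,\xi_0\cdot(\gamma(t)-\gamma(t_*))}\,\chi_{[t_*,t_*+\delta]}(t)$, where $\xi_0$ is a fixed unit vector (not depending on $\lambda$ or $\delta$) chosen so that the modulation translates $\Sigma$ to an ellipsoid passing through the origin with tangent hyperplane orthogonal to the $\widetilde\gamma_1$--axis --- the axis along which the Knapp box appearing below is shortest. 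Because $|f_\delta|=\chi_{[t_*,t_*+\delta]}$ we have $\|f_\delta\|_{L^{p,q}(I)}=\|f_\delta\|_{L^p(I)}=\delta^{1/p}$, and since $T_\lambda^\gamma f_\delta(x)=T_\lambda^\gamma\chi_{[t_*,t_*+\delta]}(x-\xi_0)$ the modulation leaves the right-hand side of \eqref{pq} unchanged and serves only to reposition $\mathbb{S}^{d-1}$, equivalently $\Sigma$, relative to the frequency box analysed next.

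The heart of the matter is the lower bound $\sigma(\mathcal E_\delta)\gtrsim\lambda^{-(d-1)}\delta^{-N}$, where $\mathcal E_\delta:=\{x\in\mathbb{S}^{d-1}:|T_\lambda^\gamma f_\delta(x)|\gtrsim\delta\}$. By the normal form, $|T_\lambda^\gamma f_\delta(x)|\gtrsim\delta$ once the coordinates $u=(u_1,\dots,u_d)$ of $x-\xi_0$ satisfy $|u_i|\lesssim\lambda^{-1}\delta^{-\mathbf a_i}=:\rho_i$ for every $i$, the $O(s^{\mathbf a_i+1})$ corrections contributing only $O(\lambda^{-1}\delta)$ to the phase over $s\in[0,\delta]$. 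Near the origin $\Sigma$ is a graph $u_1=\phi(u')$, $u'=(u_2,\dots,u_d)$, with $|\phi(u')|\lesssim|u'|^2$; hence for every $u'$ in the box $\prod_{i=2}^d[-\rho_i,\rho_i]$ the corresponding point of $\Sigma$ satisfies $|u_1|=|\phi(u')|\le\rho_1$ provided $\rho_d^2\lesssim\rho_1$, i.e.\ $\delta\gtrsim\lambda^{-1/(2\mathbf a_d-\mathbf a_1)}$, which also gives $\rho_d\lesssim1$ so that one stays on $\Sigma$. Since the surface measures of $\mathbb{S}^{d-1}$ and $\Sigma$ are comparable, $\sigma(\mathcal E_\delta)\gtrsim\prod_{i=2}^d\rho_i=\lambda^{-(d-1)}\delta^{-(\mathbf a_2+\cdots+\mathbf a_d)}=\lambda^{-(d-1)}\delta^{-N}$, whence $\|T_\lambda^\gamma f_\delta\|_{L^q(\mathbb{S}^{d-1})}\gtrsim\delta\,\sigma(\mathcal E_\delta)^{1/q}\gtrsim\lambda^{-(d-1)/q}\delta^{1-N/q}$. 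If \eqref{pq} held then $\delta^{1-N/q}\lesssim\delta^{1/p}$ for all large $\lambda$; choosing $\delta=\lambda^{-1/(2\mathbf a_d-\mathbf a_1)}\to0$ as $\lambda\to\infty$ forces $1-N/q\ge1/p$, contradicting $1/p+N/q>1$. I expect the main obstacle to be precisely this measure estimate --- making the finite-type normal form and the comparison of $\Sigma$ with the sphere rigorous, and checking that the choice of $\xi_0$ really places the slice transverse to the shortest axis of the frequency box; the closing scaling computation is routine.
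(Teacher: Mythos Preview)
Your necessity argument is correct and is essentially the paper's: a Knapp example at a point $t_*$ where $N=\|\mathbf a\|_1-\mathbf a_1$ is attained, positioned so that the sphere is tangent to the hyperplane orthogonal to the shortest side of the frequency box. The paper carries this out by building an adapted orthonormal frame $\mathbf v_1,\dots,\mathbf v_d$ with $\mathbf v_i\perp\mathrm{span}\{\gamma^{(\mathbf a_1)}(t_*),\dots,\gamma^{(\mathbf a_{d-i})}(t_*)\}$ and parametrising $\mathbb S^{d-1}$ near $-\mathbf v_d$; you instead apply the linear map $M_{t_*}^T$ (turning the sphere into an ellipsoid) and use a modulation by $\xi_0$ to translate. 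These are equivalent --- your $\xi_0$ must be $\gamma^{(\mathbf a_1)}(t_*)/|\gamma^{(\mathbf a_1)}(t_*)|$, which is exactly the paper's $\mathbf v_d$ --- and your closing inequality $\delta^{1-N/q}\lesssim\delta^{1/p}$ with $\delta=\lambda^{-1/(2\mathbf a_d-\mathbf a_1)}$ matches the paper's.

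The sufficiency direction, however, is not a one-line citation of \cite[Theorem~1.1]{HamLee}. As quoted in this paper (Theorem~\ref{hamlee11}), that result applies to \emph{nondegenerate} curves and gives the strong-type estimate on the \emph{open} range $1/p+\beta(\alpha)/q<1$; it does not directly yield the Lorentz estimate on the closed line $1/p+N/q=1$ for finite type $\gamma$. The paper's proof has genuine content you have skipped: (i) localise and normalise so that near each $t_\ell$ the curve lies in $\mathfrak G^{\mathbf a(t_\ell)}(\epsilon)$, and push the sphere measure forward to a $(d-1)$--dimensional measure $\sigma_\ell$; (ii) on each such piece, perform a dyadic decomposition $[0,1]=\bigcup_k[2^{-k-1},2^{-k}]$ away from the degenerate endpoint, rescale the $k$-th piece to a \emph{nondegenerate} curve, check that the rescaled measure is still $(d-1)$--dimensional uniformly in $k$, and then apply Theorem~\ref{hamlee11}; (iii) the resulting bound on the $k$-th piece carries a factor $2^{k(\frac1p+\frac{\kappa(\mathbf a,d-1)}{q}-1)}$ with $\kappa(\mathbf a,d-1)=\|\mathbf a\|_1-\mathbf a_1$, and the strict inequality $\kappa(\mathbf a,d-1)>\beta(d-1)$ --- which is exactly where the hypothesis $\|\mathbf a(t_0)\|_1\ge\frac{d^2+d}2+1$ enters --- lets one choose exponents on either side of the critical line, sum the geometric series, and obtain a restricted weak-type bound on $1/p+\kappa/q=1$; (iv) real interpolation along this line produces $L^{p,q}\to L^q$. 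This is Proposition~\ref{AA} in the paper. You have correctly identified \emph{why} the degeneracy hypothesis allows one to reach the closed line, but not the mechanism (dyadic rescaling plus the gap $\kappa>\beta$) that realises it; simply invoking \cite[Theorem~1.1]{HamLee} does not close the argument.
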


Note that $\|\mathbf a\|_1\ge \frac{d^2+d}2$ if $\mathbf a\in \mathcal A$. Thus, if $\gamma$ dose not satisfy the assumption of Theorem \ref{rmkk2},  $\|\mathbf b(t)\|_1= \frac{d^2+d}2$  for all $t\in I$.  
This case was already considered in Theorem \ref{HL}. 
Note that for $q \ge p$, \eqref{fnt} implies the strong type $(p,q)$  estimate for $p,q$ which satisfy  $ \frac 1p + \max_{t \in I} \{  \|\mathbf b(t)\|_1-b_1(t)\}/q \le 1 $ by the inclusion $L^p \subset L^{p,q}$.  In the case of $q<p$, the strong type estimate for $ \frac 1p + (\|\mathbf b\|_1-b_1)\frac1q< 1$ follows by  H\"{o}lder's inequality in the Lorentz space.

Theorem \ref{rmkk2} is to be shown by considering the finite type curve as a union of small perturbation of monomial curves, which can be normalized into the curves contained in  $\mathfrak G^{\mathbf a}(\epsilon)$ (see \eqref{finiteclass}).
%
If $\epsilon$ is small enough, the torsion of curves in $\mathfrak G^{\mathbf a}(\epsilon)$ can be controlled uniformly and vanish only at the origin.
 By dyadic decomposition away from the origin, 
we can apply Theorem \ref{hamlee11} for the curves on each dyadic interval via rescaling. 
For the purpose we will consider $L^p - L^q$ estimate for $T_\lambda^\gamma$ with respect to general $\alpha$--dimensional measures, which was considered in \cite{HamLee}
(also see \cite{Mockenhaupt2, Mitsis, BakSeeger} for the Stein-Tomas restriction theorem with respect to general measures).

\subsubsection*{Hyperplane} As is to be seen later, in Theorem \ref{necessary2}, i.e. the case of $\mathbb S^{d-1}$, the sharpness of the range of $p,q$ is shown by making use of the  fact that  for any tangent vector $\gamma'$ to $\gamma$ there is a normal  vector to the sphere which is parallel to  $\gamma'$.  However, this is not the case for  hyperplanes, so it is natural to expect that a weak type version of \eqref{pq} generically holds on a  wider range of $p,q$ than that in Theorem \ref{necessary2}. 
In the following we provide a complete characterization of $p,q$ for which 
a weak type version of \eqref{pq} holds.

\begin{prop}\label{hyper}
Let $d \ge 3$,  $\gamma:I \to \mathbb R^d$ be of finite type, and let $H$ be a hyperplane with a normal vector 
	$\mathbf n$.  
	Suppose that $\gamma$ is of type $\mathbf b(t) =(\mathbf b_1(t),\dots, \mathbf b_d(t))\in \mathcal A(d)$ at each $t \in I$. Let $\omega(t)$ be the minimum of $\mathbf a_1+\dots + \mathbf a_{d-1}$ while $\mathbf a_i \in 
\{\mathbf b_1(t),\dots, \mathbf b_d(t) \}$ and the vectors $ \mathbf n,  \gamma^{ (\mathbf a_1)}(t) ,\dots,\gamma^{ (\mathbf a _{d-1})}(t)
$ are linearly independent and set 
\[ \omega_\ast= \max_{t\in I} \omega(t).\] 
Then,     	the estimate
	\begin{equation}\label{TH2}
	\| T_\lambda^\gamma f\|_{L^q(H)} \lesssim \lambda^{-(d-1)/q} \|f\|_{L^{p,q}}
	\end{equation}
	holds if and only if $q > d(d-1)/2 +1$ and  $1/p +     \omega_\ast /q \le 1$.
\end{prop}

The necessity of the condition $1/p +     \omega_\ast /q \le 1$
can be shown by using a Knapp type example (for example, see the proof of  Proposition \ref{AA}).
When $q< p$, the failure of the estimate $\| T_\lambda^\gamma f\|_{L^q(H)} \lesssim \lambda^{-(d-1)/q} \|f\|_{L^{p}}$  with the critical $p,q$ satisfying  $1/p +     \omega_\ast  /q = 1$
was shown in  \cite[Section 5]{Stovall}.  

In order to compare Proposition \ref{hyper} with  Theorem \ref{necessary2}, we consider the case of a nondegenerate curve $\gamma$. In this case $\omega_\ast$ takes its value in $[d(d-1)/2, d(d-1)/2 +1]$. 
If  $\omega_\ast= d(d+1)/2 -1$,
the range of $p,q$ in Proposition \ref{hyper} becomes the smallest
but it properly contains the range $p,q$  in Theorem \ref{necessary2}. 
So \eqref{TH2} holds  for $p,q$ which are contained in a wider range than that of \eqref{pqr}.  
This  explains how the curvature of the surface plays a significant role even in the nondegenerate case.
On the other hand, if $\omega_\ast= d(d-1)/2$,
we get the largest range of $p,q$ which coincides with that of the adjoint restriction estimate to the nondegenerate curves in $\mathbb R^{d-1}$.

\begin{rmk}\label{k-dim}
The result in \cite{HamLee} (Theorem \ref{hamlee11}) also shows that $ \|T_\lambda^\gamma f\|_{L^q(S)} \le C \lambda^{-\frac{k}{q}} \| f\|_{L^p(I)}$ holds for any $k$--dimensional compact submanifold  $S$ for $k\ge 2$ whenever $1/p+(2d-k+1)k/2q < 1$ and $q > (2d-k+1)k/2+1$. 
In Section 4, we show that  the condition $q \ge (2d-k+1)k/2+1$ is  generally necessary  by constructing a $k$--dimensional submanifold $S$ for which $ \|T_\lambda^\gamma f\|_{L^q(S)} \le C \lambda^{-\frac{k}{q}} \| f\|_{L^p(I)}$ fails if  $q < (2d-k+1)k/2+1$.
\end{rmk}

\begin{rmk}\label{optimal}
The decay rate $\lambda^{-(d-1)/q}$ in \eqref{pq} is optimal
for any smooth hypersurface $S$.
We consider
a ball $B(x_0,\lambda^{-1})$ such that
$|S \cap B(x_0,\lambda^{-1})| > C\lambda^{-(d-1)}$.
Let us take $f(t) = \chi_{[0,\epsilon_0]}(t)e^{-i\lambda x_0\cdot \gamma(t)}$.
With a small enough $\epsilon_0>0$, $| T_\lambda^\gamma f (x)| \gtrsim 1$ if $x \in B(x_0, \lambda^{-1})$.
Thus we see $
\| T_\lambda^\gamma f \|_{L^q(S\cap B(x_0,\lambda^{-1}))}
\ge C \lambda^{-(d-1)/q}.
$ This shows the optimality of the bound.
\end{rmk}

\emph{Outline of the paper.}
In Section 2, we make  observations regarding  geometric properties of
the phase function, and  
we prove Theorem \ref{necessary2} and Proposition \ref{variable2}
by  randomization argument based on  Khintchine's inequality 
and by adapting the Knapp type example.    
The proofs of Theorem \ref{rmkk2} and Proposition \ref{hyper} are given in Section 3. 
In Section 4 we provide details concerning  Remark \ref{beckner} and  the example mentioned in Remark  \ref{k-dim}.  

Finally, for $A, B > 0$ we write $A\lesssim B$ if $A \le C B$ for a constant $C$. Also the constant $C$ may differ at each occurrence.

\section{Proof of Theorem \ref{necessary2} and Proposition \ref{variable2}}
\label{proof13}

We first prove Proposition \ref{variable2} by using a randomization argument for \eqref{qn} and modifying  the Knapp example for \eqref{pqn}. 
Then, we use   Proposition \ref{variable2}  to show the necessity part of  Theorem \ref{necessary2}.

\subsection{Proof of Proposition \ref{variable2}}
Since $\partial_t \nabla_y \Psi(0,0) = 0$ and $\det \nabla_y\partial_t \nabla_y \Psi \neq 0$ on the support of $a$, 
by the implicit function theorem,  there exists a neighborhood $U\times V \subset \mathbb R^{d-1}\times \mathbb R$ of $(0,0)$ and a $C^1$ function $g : V \to \mathbb R^{d-1}$ such that $g(0)=0$, and $\partial_t \nabla_y \Psi(g(t),t) = 0$ for all $t \in V$.
For a fixed $t_k \in V\cap \supp a$, let us set $y_k = g(t_k) \in U \cap {\supp {a}}$.

By the Taylor expansion of $\Psi$ at $y_k$ and then at $t_k$, we have 
\begin{align*}
\Psi(y,t) 
 =&  
\Psi(y_k,t) + \mathlarger\langle \nabla_y \Psi(y_k,t_k), y-y_k \mathlarger\rangle  + \mathlarger\langle \partial_t^2 \nabla_y \Psi(y_k,t_k) \frac{(t-t_k)^2}{2!}, y-y_k \mathlarger\rangle + \\  
 & \dots + \mathlarger\langle \partial_t^d\nabla_y \Psi(y_k,t_k) \frac{(t-t_k)^d}{d!}, y-y_k \mathlarger\rangle  + O(|y - y_k|^2 + |y-y_k||t-t_k|^{d+1}),
\end{align*}
where the first order term vanishes because of $\partial_t\nabla_y\Psi(y_k,t_k) =0$.
Let us set 
\[ 
\gamma_\circ(t) = (t^2/2!,\dots,t^d/d!).
\]
Discarding harmless factors  $\Psi(y_k, t)$ and
 $\langle\nabla_{y}\Psi(y_k,t_k),  y-y_k\rangle$,  
we  may assume that
\begin{equation} 
\label{taylor}
 \Psi(y,t)  
   =  \langle \mathcal M(t_k) \gamma_\circ(t-t_k),  y - y_k  \rangle  +O(|y - y_k|^2+ |y- y_k||t-t_k|^{d+1}).
 \end{equation}    
Here $\mathcal M(t_k)$ is the matrix of which $j$--th column vector is given by $\partial_t^{j+1} \nabla_y \Psi(y_k,t_k)$, $1\le j \le d-1$. By the assumption \eqref{M'}, $\mathcal M(t_k)$ is nonsingular on the support of $a$.

Let us fix $\delta>0$ such that  $[0,\delta] \subset V \cap \supp a$, and  take $\lambda>0$ such that  $\lambda^{-1/(2d)} <\delta$ and $\delta \lambda^{1/(2d)} =: \ell \in \mathbb N$.
We decompose the interval $[0,\delta]$ into intervals $I_k=[t_{k-1}, t_{k}] $, $1 \le k \le \ell$, of length $|I_k|\sim \lambda^{-1/(2d)}$ such that $[0,\delta]=\bigcup_{1\le k\le \ell}  I_k $ .
On each interval $I_k$, we  observe the following.

\begin{lem}\label{const}
Let $\rho =1/(2d)$. 
Consider a rectangle $\mathcal R\subset \mathbb R^{d-1}$
defined by \[\mathcal R=\{ (x_2,\dots,x_d) : |x_j | \le c \lambda^{-1+j\rho}, \, 2 \le j \le d \, \}\]  with a small constant $c>0$.
For each interval $I_k$, let $\mathcal P_k$ be the parallelepiped defined by
\[
\mathcal P_k = \{ y: \mathcal M^T(t_k) (y-y_k) \in \mathcal R\},
\]
where $y_k = g(t_k)$ and $\mathcal M^T(t_k)$ is the transpose of the matrix of $\mathcal M(t_k)$.
If $c$ is sufficiently small, then $| \Psi(y,t) | \le \lambda^{-1}$ for $y \in \mathcal P_k$ and $t \in I_k$.
\end{lem}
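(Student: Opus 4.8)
The plan is to read everything off the Taylor expansion \eqref{taylor} after making the linear change of variables adapted to $\mathcal P_k$. First I would set $x=\mathcal M^T(t_k)(y-y_k)$, so that $y\in\mathcal P_k$ is precisely the condition $x\in\mathcal R$, i.e. $|x_i|\le c\lambda^{-1+i\rho}$ for $2\le i\le d$. The leading term of \eqref{taylor} then becomes
\[
\langle \mathcal M(t_k)\gamma_\circ(t-t_k),\,y-y_k\rangle=\langle \gamma_\circ(t-t_k),\,x\rangle=\sum_{i=2}^{d}\frac{(t-t_k)^i}{i!}\,x_i,
\]
and since $t\in I_k$ gives $|t-t_k|\le|I_k|\lesssim\lambda^{-\rho}$, each summand is bounded by $\lambda^{-i\rho}\cdot c\lambda^{-1+i\rho}=c\lambda^{-1}$; summing the $d-1$ terms bounds the main term by $C_d\,c\,\lambda^{-1}$.

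Next I would estimate the error $O(|y-y_k|^2+|y-y_k||t-t_k|^{d+1})$ in \eqref{taylor}. By \eqref{M'}, $\mathcal M(t)$ — hence $\mathcal M^T(t)$ — is invertible with uniformly bounded inverse over the compact set $\supp a$, so $|y-y_k|\lesssim|x|$. Among the components of $x$ the largest scale is the $i=d$ one, since $\rho>0$ makes $-1+i\rho$ increasing in $i$; thus $|x|\lesssim c\lambda^{-1+d\rho}=c\lambda^{-1/2}$ by the choice $\rho=1/(2d)$. Consequently $|y-y_k|^2\lesssim c^2\lambda^{-1}$, and
\[
|y-y_k|\,|t-t_k|^{d+1}\lesssim c\lambda^{-1/2}\cdot\lambda^{-(d+1)\rho}=c\,\lambda^{-1-1/(2d)}\le c\,\lambda^{-1},
\]
again using $\rho=1/(2d)$, for which $\tfrac12+\tfrac{d+1}{2d}=1+\tfrac1{2d}$.

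Combining the two estimates yields $|\Psi(y,t)|\le C\,c\,\lambda^{-1}$ on $\mathcal P_k\times I_k$, with $C$ depending only on $d$ and on the fixed bounds for $\mathcal M$ and $\mathcal M^{-1}$ over $\supp a$; choosing $c$ small enough that $Cc\le 1$ finishes the proof. The argument is essentially bookkeeping, and the only point needing attention is that $\rho=1/(2d)$ is exactly the threshold that simultaneously pushes the quadratic error $|y-y_k|^2$ down to size $\lambda^{-1}$ while keeping the $|t-t_k|^{d+1}$ error below $\lambda^{-1}$; the uniform invertibility of $\mathcal M$ on $\supp a$ is what makes all constants independent of $k$ and $\lambda$.
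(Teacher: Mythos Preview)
Your proof is correct and follows essentially the same approach as the paper: both rewrite the main term as $\langle\gamma_\circ(t-t_k),\mathcal M^T(t_k)(y-y_k)\rangle$ to get the $(d-1)c\lambda^{-1}$ bound, then use the uniform invertibility of $\mathcal M(t_k)$ to deduce $|y-y_k|\lesssim c\lambda^{-1+d\rho}=c\lambda^{-1/2}$ and conclude that the two error terms are $O(c^2\lambda^{-1})$ and $O(c\lambda^{-1-\rho})$ respectively. The only cosmetic difference is that the paper writes the inverse-matrix bound explicitly via the column vectors of $\mathcal M^{-T}(t_k)$, whereas you invoke compactness of $\supp a$ directly.
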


\begin{proof}
Since $|t-t_k|\lesssim \lambda^{-\rho}$, we have,  for $y \in \mathcal P_k$, 
\begin{equation}
\label{haha}
 | \langle \mathcal M(t_k) \gamma_\circ(t-t_k),  y - y_k \rangle | = | \langle \gamma_\circ(t-t_k),  \mathcal M^T (t_k) ( y - y_k) \rangle  | \lesssim (d-1) c \lambda^{-1}.
 \end{equation}
If we set $\| v(t_k)\| = \max_i |v_i(t_k)|$ for the column vectors $v_i(t_k)$ of $\mathcal M^{-T}(t_k)$, then $
| y -y_k| \le  (d-1) c \|v(t_k)\|  \lambda^{-1+d\rho}$ for $y \in \mathcal P_k$.
Hence,  we obtain 
\[
| y - y_k|^2 \lesssim c^2 \lambda^{-2+2d\rho} = c^2\lambda^{-1}
\quad \mbox{and} \quad 
| y - y_k| |t-t_k|^{d+1} \lesssim c  \lambda^{-1-\rho} \ll c\lambda^{-1}.
\]
Thus, by  \eqref{taylor},  \eqref{haha}, and the above we see  $| \Psi(y,t) | \le \lambda^{-1}$ for a sufficiently small $c>0$. 
\end{proof}

To prove Proposition \ref{variable2} we need to show that the estimate 
\begin{equation}\label{Spq}
\| \mathfrak T_\lambda f 
\|_{L^q(\mathbb R^{d-1})} \le C \lambda^{-\frac{d-1}q} \| f\|_{L^{p}(I)}
\end{equation}
implies 
\begin{align}
\label{qnn}     & q\ge  (d^2+d)/2,\\ 
\label{pqnn}  1/p&+(d^2+d-2)/2q \le 1.
\end{align}

\begin{proof}[{Proof of \eqref{Spq} $\Rightarrow$ \eqref{qnn}}] 
Let $\{\epsilon_k\}_{k=0}^\ell$ be independent random variables having the values $\pm1$ with equal probability.
We set  \[f(t)= \sum_{k=0}^\ell \epsilon_k \chi_{I_k}(t)\]
and  consider the expectation $\mathbb E ( \| \sum_k \epsilon_k \mathfrak T_\lambda \chi_{I_k}\|_{L^q}^q)$. 
By Fubini's theorem and Khintchine's inequality, we get, for $1<q<\infty$,
\begin{align}\label{expectation}
	\mathbb E ( \| \sum_k \epsilon_k \mathfrak T_\lambda \chi_{I_k}\|_{L^q}^q) 
  = 
	\int \mathbb E ( |\sum_k \epsilon_k \mathfrak T_\lambda \chi_{I_k}(y) |^q ) dy 
 \sim 
	\int \Big( \sum_k |\mathfrak T_\lambda \chi_{I_k}(y) |^2 \Big)^{\frac q2} dy.
\end{align}
By Lemma \ref{const} we have $ |\lambda \Psi(y,t)|\le 1$ for $y \in \mathcal P_k$ and $t \in I_k$.  
It is easy to see  
\[
|\mathfrak T_\lambda \chi_{I_k}|^2 \gtrsim |I_k|^2\chi_{\mathcal P_k} \sim \lambda^{-1/d} \chi_{\mathcal P_k}.
\]
Thus, it follows that
\begin{align*}
	\int \Big( \sum_k |\mathfrak T_\lambda \chi_{I_k}(y) |^2 \Big)^{\frac q2} dy
\gtrsim 
	\lambda^{-\frac q {2d}} \int |\sum_k \chi_{\mathcal P_k} |^{\frac q2}\,dy
\gtrsim 
	\lambda^{-\frac q {2d}} \int \sum_k \chi_{\mathcal P_k} \,dy
=
	\lambda^{-\frac q {2d}} \sum_k |\mathcal P_k|.
\end{align*}
For the second inequality, we use the fact that $q \ge 2$.
Combining this with \eqref{expectation} and using \eqref{Spq},
we see that
\begin{align}\label{sum}
	\lambda^{-\frac{q}{2d}}\sum_{k=0}^\ell |\mathcal P_k| 
\lesssim  
	\Big\| \sum_{k=0}^\ell |\mathfrak T_\lambda \chi_{I_k} |^2 \Big\|_{L^{\frac q2}}^{\frac q2}
\sim 
	\mathbb E ( \| \sum_k \epsilon_k \mathfrak T_\lambda \chi_{I_k}\|_{L^q}^q)  
\lesssim 
	\lambda^{-(d-1)} \delta^{\frac q p}. 
\end{align}
From the definition of $\mathcal P_k$ in Lemma \ref{const}, it follows that $|\mathcal P_k|\sim \lambda^{-(d-1)+(\frac{d^2+d}{2}-1)\cdot\frac{1}{2d}}$. Since $\ell \sim \delta \lambda^{\frac{1}{2d}}$, we have
\[
\delta\lambda^{-\frac{1}{2d} \big( q -\frac{d^2+d}{2}\big)} \lesssim \delta^{\frac q p}.
\]
For a fixed constant $\delta>0$, we see that \eqref{qnn}
is necessary by letting $\lambda \rightarrow \infty$.
\end{proof}

\begin{proof}[{Proof of \eqref{Spq} $\Rightarrow$ \eqref{pqnn}}] 
Let $J \subset [0,\delta]$ be an interval of length $|J| =\lambda^{-1/(2d)}$. 
By Lemma \ref{const}, we can find a parallelepiped $\mathcal P$ such that $| \psi(y,t)| \le \lambda^{-1}$ for $y \in \mathcal P$, $t \in J$.
If we set $f=\chi_{J}$, it follows that 
\[
	\| \mathfrak T_\lambda f  \|_{L^q(\mathcal P)} 
\ge 
	C\lambda^{-\frac{1}{2d}}|\mathcal P|^{1/q}
\ge 
	\lambda^{-\frac{1}{2d}} \big( \lambda^{-(d-1)+\frac{d^2 +d -2}{2}\cdot\frac{1}{2d}} \big)^{\frac 1q} .
\]
By \eqref{Spq}, we obtain
$
\lambda^{-\frac{1}{2d}}  \lambda^{-\frac{d-1}{q}+\frac{d^2 +d -2}{2}
\cdot\frac{1}{2dq}}  \lesssim  \lambda^{-\frac{ d-1}{q}}\lambda^{-\frac{1}{2dp}}. 
$
Thus we get  \eqref{pqnn}  by letting  $\lambda \rightarrow \infty$.
\end{proof}

\begin{proof}[Proof of Theorem \ref{necessary2}]

The sufficiency part follows from Theorem \ref{HL}. 
To prove the necessity part it is enough to show that  the estimate for $T_\lambda^\gamma f$ over $\mathbb S^{d-1}$ can be reformulated to an estimate for 
$\mathfrak T_\lambda$ (see \eqref{Tf'}) while the  phase function $\Psi$ satisfies  the hypotheses \eqref{M'} and \eqref{Gcurvature} in Proposition \ref{variable2}.
 
For a given $\gamma$, we write $\gamma(t)=(\gamma_1(t),\gamma_\ast(t))
\in \mathbb R \times \mathbb R^{d-1}$. Since $\gamma$ satisfies \eqref{nonv}, we have $\gamma'(0) \neq 0$. 
By rotation 
we may assume that $\gamma$ satisfies \eqref{nonv},
\begin{equation}
\label{nonzero}
\gamma_1'(0) \neq 0, \text{ and } \gamma_\ast'(0) = 0.
\end{equation}
Then we consider the part of  $\mathbb S^{d-1}$ near $-e_1$.  That is to say, $\mathbb S^{d-1}\cap B(-e_1, \epsilon_0)$ for some small $\epsilon_0>0$.  Then,  we can parametrize $\mathbb S^{d-1}\cap B(-e_1, \epsilon_0)$ with a smooth function $\phi$ such that   
$ y \mapsto  (\phi( y)-1,  y)$ for $ y= (y_2,\dots,y_d) \in \mathbb R^{d-1}$ near the origin,
$\phi(0)=0$, $\nabla_y \phi(0)=0$, and 
\begin{equation}
\label{hessian}
\det H\phi  = \det \left(  \frac{\partial^2 \phi}{\partial y_i \partial y_j}  \right)_{2\le i, j\le d} \neq 0
\end{equation}
near $0$.  
Here $H$ denotes the Hessian matrix. 
Then, discarding the harmless constant $-1$,  it suffices to consider an oscillatory integral operator
\begin{equation}\label{Sf}
\mathfrak T_\lambda f(y) = \int_I e^{i \lambda \psi( y,t) } a(y,t)
f(t) \,dt,
\end{equation}
where $ \psi(y,t) = (\phi(y), y) \cdot \gamma(t)$ for 
$(y,t)\in\mathbb R^{d-1}\times \mathbb R$  and $a $ is a smooth cutoff function which is supported in a small enough neighborhood of the origin.  
Thus it remains to check $\psi$ satisfies \eqref{M'} and \eqref{Gcurvature} near the origin.

Since $\partial_t \nabla_y \psi(0,0) = \nabla_y\phi(0) \gamma_1'(0) + \gamma_\ast'(0) = 0$,
it remains to check that $\psi$ satisfies \eqref{M'}  and \eqref{Gcurvature} on the support of $a$. 
Because $\det\nabla_y \partial_t \nabla_y \psi(0,0) = \gamma_1'(0)\det H \phi(0) \neq 0$,
\eqref{Gcurvature} follows by continuity provided that the support of $a$ is small enough.
It remains to check that $\psi$ satisfies \eqref{M'} on the support of $a$. 
By the implicit function theorem, there exist neighborhoods $U \subset \mathbb R^{d-1}$ and $V \subset \mathbb R$ of $(0,0)$, and $g\in C^1(V)$ 
such that $g(0) =0$, $g(V) \subset U$, and 
\begin{equation}\label{ak}
\partial_t \nabla_y \psi(g(t),t) = 0 \text{ for all } t \in V.
\end{equation}
As observed in the proof of Proposition \ref{variable2}, it is enough to show that
$\psi$ satisfies \eqref{M'}  for $y =g(t)$. 
By \eqref{ak}, we have
$\partial_t\nabla_y \psi(g(t),t) = \gamma_1'(t) \nabla_y\phi(g(t)) + \gamma'_\ast(t)=0$ and
 $\gamma_1'(t) \neq 0$.
 For \eqref{M'}, we observe that 
\[
	\partial_t^{j+1} \nabla_y \psi(g(t),t) 
= 
	\partial_t^{j+1}\Big(\gamma_1(t) \nabla_y \phi(y) +  \gamma_\ast(t)\Big)\Big|_{y=g(t)}
= 
	- \frac{\gamma_1^{(j+1)}(t) }{\gamma_1'(t)} \gamma'_\ast(t) + \gamma^{(j+1)}_\ast(t).
\]
Using this we have 
\begin{align*}
&\quad\frac{1}{\gamma_1'(t)} \det(\gamma'(t),\dots,\gamma^{(d)}(t)) \\
& = 
	\det \begin{pmatrix} 1 & \gamma_1''(t) /\gamma_1'(t)&\cdots&\gamma_1^{(d)}(t) / \gamma_1'(t) \\
	\gamma_\ast'(t)  &\gamma_\ast''(t)& \cdots&
	\gamma_\ast^{(d)}(t) \end{pmatrix} \\
&  = 
	\det \begin{pmatrix} 1 &  0 &\cdots& 0  \\
	\gamma_\ast'(t)  &\gamma_\ast''(t) -\frac{\gamma_1''(t)}{\gamma_1'(t)}\gamma_\ast'(t) & \cdots&
	\gamma_\ast^{(d)}(t) -\frac{\gamma_1^{(d)}(t)}{\gamma_1'(t)}\gamma_\ast'(t)
	\end{pmatrix}\\
&  = 
	\det \mathcal (\partial_t^2 \nabla_y \psi(g(t),t),\dots, \partial_t^d \nabla_y \psi(g(t),t) ).
\end{align*}
Therefore \eqref{M'} holds since $\gamma$ is nondegenerate on $I$.
\end{proof}

\newcommand{\ceil}[1]{\left\lceil #1 \right\rceil}

\section{Proof of Theorem \ref{rmkk2} and Proposition \ref{hyper}}

We first prove   Theorem \ref{rmkk2}.

If $\gamma$ is a finite type curve, after finite decomposition, translation (also subtracting  a harmless constant) and rescaling,  we may regard the curve as the one given by a small perturbation of a monomial curve. 
Thus we are naturally led to consider the class of curve $\mathfrak G^{\mathbf a}(\epsilon)$
which is defined as follows: For $\epsilon>0$ and $\mathbf a\in \mathcal A$,
\begin{equation}
\label{finiteclass}
\mathfrak G^{\mathbf a}(\epsilon) 
= 
	\{  \gamma \in  C^{\infty}(I) : \gamma(t) = (t^{\mathbf a_1}\varphi_1(t),\dots, t^{\mathbf a_d}\varphi_d(t)),\ \|\varphi_i - 1/(\mathbf a_i!) \|_{C^{\mathbf a_d+1}(I)}  \le \epsilon \} .
	\end{equation}
In order prove Theorem \ref{rmkk2} it is enough to show the desired estimate with $\gamma\in \mathfrak G^{\mathbf a}(\epsilon)$ while the surface measure is replaced with the $(d-1)$--dimensional measure (see  Definition \ref{dimensional}). 

This type of reduction from finite type to almost monomial type already appeared in \cite[Section 3]{HamLee}, so we shall  be brief. We set $[a,b]^* = [a,b]$ if $a  < b$, or $[a,b]^* = [b,a]$ if $a >b$.  Suppose  $\gamma$ is of type $\mathbf a(t)$ at $t$ and  let  us set 
\[ M_{t}=[\gamma^{(\mathbf a_1(t))}(t), \dots, \gamma^{(\mathbf a_d(t))}(t)],  \  \quad  D^u_t= (u^{\mathbf a_1(t)}e_1,\dots,u^{\mathbf a_d(t)}e_d). \]
  Then, by Taylor's theorem, it is not difficult to see that there exists $\delta>0$ such that, if  $[t_0, t_0+u]^*\subset I$ and $|u|<\delta$, 
\begin{equation} 
	\label{exp}\gamma( u t+ t_0) - \gamma(t_0) 
= 
	M_{t_0} D^{u}_{t_0} (t^{\mathbf a_1(t_0)}\varphi_1(ut),\dots, t^{\mathbf a_d(t_0)}\varphi_d(ut)),   \quad t\in I,
\end{equation} 
where  $\varphi_i$ are smooth functions satisfying $\varphi_i(ut) = 1/(\mathbf a_j(t_0)!)  + O(\delta)$.  
Thus, for any $\epsilon>0$, there exists $\delta=\delta(\epsilon, t_0)$ such that
\[
	\gamma_{t_0}^{u}(t)
:=
	( M_{t_0} D_{t_0}^{u} )^{-1} (  \gamma( u t+ t_0) - \gamma(t_0))   \in \mathfrak G^{\mathbf a(t_0)}(\epsilon)
\]
whenever $|u| <\delta$ and $[t_0, t_0+u]^*\subset I$.  
See \cite[Lemma 3.1, Lemma 3.3]{HamLee} for details. 
 Suppose now that  $\epsilon>0$ be fixed. 
Since $I$ is compact, we can decompose $I$ into finitely many intervals $I_\ell =[t_\ell, t_\ell + u_\ell]^*$ such that $\gamma_{t_\ell}^{u_\ell}(t) \in \mathfrak G^{\mathbf a(t_\ell)}(\epsilon)$.  
Recalling   $d\sigma$ denotes the surface measure on $\mathbb S^{d-1}$, we  define a positive  measure  $d\sigma_\ell$ defined by 
\[
\int F(x) d\sigma_\ell(x)
        := 
               \int F((M_{t_\ell} D_{t_\ell}^{u_\ell})^T x ) d\sigma(x),\footnote{The definition can be justified via the Riesz representation theorem. See \cite[pp. 257--258]{HamLee}.} \ F \in C_c(\mathbb R^{d}),
\]
which is clearly a  $( d-1 )$--dimensional  measure.  By making change of variables, we see that  
 \begin{align*}
& \| T_\lambda^\gamma f \|_{L^q (\mathbb S^{d-1}) }  
   \le         
      \sum_\ell \Big\| \int_{[t_\ell,t_\ell+u_\ell]^*} e^{i\lambda x\cdot  \gamma(t)} f(t)\,dt \Big\|_{L^q(\mathbb S^{d-1})} 
           \\
              =  
                 \sum_\ell  &\Big\| \int_{I} e^{i\lambda (M_{t_\ell} D_{t_\ell}^{u_\ell})^T x \,\cdot\,{\gamma}_{t_\ell}^{u_\ell}(t)}  f_{u_\ell}(t)\,dt \Big\|_{L^q(\mathbb S^{d-1})} 
                  = \sum_\ell     \Big\| T_\lambda^{\gamma_{t_\ell}^{u_\ell}} f_{u_\ell} \Big\|_{L^q(d\sigma_\ell)},
 \end{align*}
 where $  \mathbf a(t_\ell)=(\mathbf a_1(t_\ell),\dots, \mathbf a_d(t_\ell))$, $f_{u_\ell} (t) = u_\ell f(u_\ell t+t_\ell)$.  Since there are only finitely many $\ell$, so the proof of Theorem \ref{rmkk2} reduces  to showing that,  for each $\ell$,  
 \begin{equation}\label{final}
 \Big\| T_\lambda^{\gamma_{t_\ell}^{u_\ell}} g \Big\|_{L^q(d\sigma_\ell)} \le \lambda^{-(d-1)/q} \|g\|_{L^{p,q}(I)}
\end{equation}
holds whenever $q >d(d+1)/2$ and $1/p +  \max_{t \in I} \{  \|\mathbf a(t)\|_1-\mathbf a_1(t)\} /q \le 1$.
 For this purpose, we actually prove more than what we need by replacing the $(d-1)$--dimensional measure $\sigma_\ell$ by a general $\alpha$--dimensional measure.
We basically  follow the argument in \cite{HamLee}. 

\begin{defn}\label{dimensional}
Let $\alpha\in (0, d]$ and by $B(x,r)$ we denote the ball centered at $x$ of radius $r$. Suppose that $\mu$ is a positive Borel regular measure with compact support such that
\begin{equation}\label{measure} 
\mu (B(x,r)) \le C_\mu r^\alpha \,\text{ for } (x,r) \in \mathbb R^{d}\times \mathbb R_+ 
\end{equation}
with $C_\mu>0$ independent of $x,r$. Then we say $\mu$ is $\alpha$--dimensional.
\end{defn}

For $\nu\in \mathbb R$ we denote by  $\ceil\nu$ the smallest integer which is not less than $\nu$.  For $(\mathbf a, \alpha)\in \mathcal A\times (0,d]$ we set
\[
\kappa(\mathbf a,\alpha) :=
(\alpha +1 - \ceil\alpha)\mathbf a_{d-\ceil\alpha+1}+\sum_{i=d-\ceil \alpha +2}^d \mathbf a_i, \quad   \beta(\alpha) := \kappa((1,2,\dots, d),\alpha).
\]
Thus $\kappa(\mathbf a,\alpha)\ge \beta(\alpha)$ and $\kappa(\mathbf a,\alpha)= \beta(\alpha)$ if and only if $\mathbf a=(1,2,\dots, d)$.
We also note that   $\kappa(\mathbf a,\alpha)> \beta(\alpha)$ implies  $\|\mathbf a\|_1 
 > {d(d+1)}/2$. 

\begin{prop}\label{AA}
Let $d\ge 3$. Let  $\gamma \in \mathfrak G^{\mathbf a} (\epsilon)$ for some $\mathbf a\in \mathcal A$ 
with $\|\mathbf a\|_1   - {d(d+1)}/2 \ge 1$.
Suppose that $\mu$ is a compactly supported positive Borel measure satisfying \eqref{measure} with $\alpha \in [d-1,d]$.
If $\epsilon>0$ is sufficiently small, then 
\begin{equation}\label{finitetype}
\|T_\lambda^\gamma f \|_{L^{q}(d\mu)} 
\lesssim \lambda ^{-\alpha/q} \| f \|_{L^{p,q}(I)}
\end{equation}
holds for 
$ 1/p + \kappa(\mathbf a, \alpha)/q \le 1$ 
and $q> \beta(\alpha)+1$. 
Moreover,  if $1/p + \kappa(\mathbf a, \alpha)/q > 1$,  there is a measure $\mu$ which satisfies  \eqref{measure} but the estimate \eqref{finitetype} fails.

\end{prop}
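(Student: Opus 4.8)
The plan is to split the proof into a positive part (the estimate \eqref{finitetype}) and a sharpness part (construction of a bad measure), and to handle the positive part by a dyadic decomposition of the interval $I$ away from the origin, where the curve $\gamma\in\mathfrak G^{\mathbf a}(\epsilon)$ becomes genuinely nondegenerate. Concretely, I would write $I=\{0\}\cup\bigcup_{j\ge 0} I_j$ with $I_j\sim 2^{-j}$, and on each $I_j$ rescale $t\mapsto 2^{-j}t$ so that $\gamma$ restricted to $I_j$ is, after an affine (diagonal) change of variables $x\mapsto D^{2^{-j}}x$ adapted to the monomial weights $t^{\mathbf a_i}$, a small $C^{\mathbf a_d+1}$-perturbation of the moment curve $(t,t^2/2!,\dots,t^d/d!)$ on $I$. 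One then applies the $\alpha$-dimensional restriction estimate of \cite[Theorem 1.1]{HamLee} (stated here as Theorem \ref{hamlee11}, with the region $1/p+\beta(\alpha)/q\le 1$, $q>\beta(\alpha)+1$) to the nondegenerate rescaled curve against the pushed-forward measure $\mu_j$; the key bookkeeping is that the diagonal rescaling multiplies both the frequency $\lambda$ and the dimensional constant of $\mu_j$ by explicit powers of $2^j$, and tracking these powers turns the exponent $\beta(\alpha)$ for the moment curve into $\kappa(\mathbf a,\alpha)$ for the monomial weights $\mathbf a$. Summing the resulting geometric series in $j$ is exactly where the condition $q>\beta(\alpha)+1$ (hence the $+1$ in the hypothesis, together with $\|\mathbf a\|_1-d(d+1)/2\ge 1$ to gain strict summability) and the Lorentz-space gain $L^{p,q}$ enter — the $L^{p,q}$ norm on the right is needed precisely to absorb the loss from summing the $\ell^q$-normalized pieces, via the standard argument that $\sum_j \|f\chi_{I_j}\|_p^q\lesssim \|f\|_{L^{p,q}}^q$ when $|I_j|$ is lacunary.

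For the endpoint-type threshold $1/p+\kappa(\mathbf a,\alpha)/q=1$ I would keep the argument at the level of real interpolation / Lorentz spaces rather than attempting the strong-type estimate: each dyadic block is controlled in a restricted-strong-type sense and the sum is reassembled using the Lorentz norm, which is why the statement is phrased with $L^{p,q}(I)$ and $\le 1$ rather than $<1$. The main obstacle I anticipate is the uniformity of the constant in \cite[Theorem 1.1]{HamLee} across the whole family $\mathfrak G^{\mathbf a}(\epsilon)$ (equivalently, across all dyadic blocks): one must check that the perturbation produced by the rescaling stays within the $\epsilon$-neighborhood for which Theorem \ref{hamlee11} applies uniformly, and that the $\alpha$-dimensional constant of $\mu_j$ does not degenerate — this is the content of the change-of-variables computation analogous to \cite[Lemma 3.1, Lemma 3.3]{HamLee}, and it is really the heart of the matter even though each individual estimate is quoted.

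For the sharpness part, when $1/p+\kappa(\mathbf a,\alpha)/q>1$ I would build the obstruction measure $\mu$ by concentrating mass on a single well-chosen affine subspace (or curved $\lceil\alpha\rceil$-dimensional piece, to realize the dimension $\alpha$ exactly) aligned with the degenerate directions of $\gamma$ at the origin, and test \eqref{finitetype} against $f=\chi_{[0,\delta]}$ or a Knapp-type bump $f=\chi_J$ with $|J|\sim\lambda^{-1/\mathbf a_d}$ (the natural Knapp scale for a curve with vanishing $t^{\mathbf a_d}$ coordinate); on the parallelepiped where the phase $\lambda x\cdot\gamma(t)$ is $O(1)$ one gets $|T_\lambda^\gamma f|\gtrsim |J|$, and evaluating both sides of \eqref{finitetype} with the $\alpha$-dimensional normalization of $\mu$ yields, after letting $\lambda\to\infty$, exactly the constraint $1/p+\kappa(\mathbf a,\alpha)/q\le 1$ — the exponent $\kappa(\mathbf a,\alpha)$ emerging from the product of the $\mathbf a_i$-dependent side lengths of the Knapp box paired against the $\alpha$-dimensional volume growth of $\mu$ (this is the same mechanism as in the proof of Proposition \ref{variable2}, with the monomial weights replacing the uniform $i$). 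The only delicate point here is choosing $\mu$ so that it is simultaneously $\alpha$-dimensional and large on the relevant parallelepiped; I would take $\mu$ to be (a smooth density times) the $\lceil\alpha\rceil$-dimensional Hausdorff measure on a subspace spanned by the last coordinate directions, rescaled so that \eqref{measure} holds with the sharp constant, which is the standard such construction.
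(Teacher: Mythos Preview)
Your overall strategy---dyadic decomposition of $I$ away from $0$, anisotropic rescaling by $\mathfrak D_{2^{-\ell}}^{\mathbf a}$ to reduce to a nondegenerate curve, applying Theorem~\ref{hamlee11} to each piece against the pushed-forward measure $\mu_\ell$, verifying that $\mu_\ell$ remains uniformly $\alpha$-dimensional, and the Knapp construction for sharpness---is exactly the paper's. The sharpness measure in the paper is precisely the product of Dirac deltas and a power weight you describe, namely $d\mu=\chi_{B(0,1)}\prod_{i=1}^{d-\lceil\alpha\rceil}d\delta(x_i)\,|x_{d-\lceil\alpha\rceil+1}|^{\alpha-\lceil\alpha\rceil}\,dx_{d-\lceil\alpha\rceil+1}\cdots dx_d$, tested against $f=\chi_{[0,\lambda^{-\rho}]}$.

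There is, however, a genuine gap in your summation at the critical line $1/p+\kappa(\mathbf a,\alpha)/q=1$. Even granting the Lorentz inequality $\sum_\ell \|f\chi_{I_\ell}\|_p^q\lesssim \|f\|_{L^{p,q}}^q$, this would only control $\big(\sum_\ell \|T_\ell f\|_{L^q(d\mu)}^q\big)^{1/q}$, and for a general $\alpha$-dimensional measure $\mu$ there is no almost-orthogonality letting you pass from that to $\|\sum_\ell T_\ell f\|_{L^q(d\mu)}$; the triangle inequality gives only the $\ell^1$-sum, which diverges on the critical line. The paper resolves this with a Bourgain-type splitting: fix $p$ on the critical line and pick $q_1<q<q_2$ so that $(p,q_1)$ and $(p,q_2)$ both lie in the \emph{open} region \eqref{qpalpha} but on opposite sides of the line $1/p+\kappa(\mathbf a,\alpha)/q=1$ (possible precisely because $\kappa(\mathbf a,\alpha)>\beta(\alpha)$, which is where the hypothesis $\|\mathbf a\|_1-d(d+1)/2\ge 1$ is used). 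Split $\sum_\ell T_\ell f$ at level $N$, apply Chebyshev and the dyadic bound \eqref{tkfmu} with exponent $q_1$ to $\sum_{\ell\le N}$ and with $q_2$ to $\sum_{\ell>N}$, and optimize $N\sim (\delta^{-1}\|f\|_p)^{p'}$ to obtain the weak-type bound $\mu(\{|T_\lambda^\gamma f|>\delta\})\lesssim \delta^{-q}\lambda^{-\alpha}\|f\|_p^q$. Real interpolation along the resulting family of weak-type $(p,q)$ estimates then yields \eqref{finitetype} with the $L^{p,q}$ norm. Your phrase ``restricted-strong-type \ldots\ reassembled using the Lorentz norm'' points in the right direction but does not supply this mechanism; the condition $q>\beta(\alpha)+1$ is needed simply to invoke Theorem~\ref{hamlee11}, not for summability. (A minor correction: under the rescaling $\lambda$ is unchanged---what changes is the normalization of the measure by $2^{-\ell\kappa(\mathbf a,\alpha)}$ and the input $f_\ell$.)
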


In fact, 
Proposition \ref{AA} continues to hold for $\alpha\in (0,d-1)$ under some additional conditions on $p,q$ as explained after the statement of Theorem \ref{hamlee11}.

\newcommand{\ca}{\ceil\alpha}

\begin{proof}
We decompose
$T_\lambda^\gamma f =\sum_{\ell =0}^\infty T_\ell f$
where $T_\ell f$ is defined by 
\begin{align}\label{TKF}
T_\ell f(x) &= \int_{[2^{-\ell-1},2^{-\ell}]} e^{i\lambda x \cdot \gamma(t)} f(t)\,dt.
\end{align}

For $h>0$, let us define  $\mathfrak D_h^\mathbf a=(h^{\mathbf a_1}e_1,\dots,h^{\mathbf a_d}e_d)$. For each fixed $\ell$, we define a positive Borel measure $\mu_\ell$  by setting 
\[
\int F(x) \,d\mu_\ell(x) =
2^{-\ell\,\kappa(\mathbf a, \alpha) } 
\int F( \mathfrak D_{2^{-\ell}}^{\mathbf a} x )\, d\mu(x), \  \  F\in C_c(\mathbb R^d).
\]
We now show that $\mu_\ell$  satisfies \eqref{measure}. 
Note that the set $\mathcal R = \{ y: \mathfrak D_{2^{-\ell}}^{\mathbf a} y\in B(x,r) \}$, which is 
contained in a rectangle of dimensions $C2^{\ell {\mathbf a_1}}r\times C2^{\ell \mathbf a_2} r\times \dots\times C2^{\ell \mathbf a_d}r$,
can be covered by as many as $ O( \prod^{d}_{i=d+2-\ca} 2^{\ell(\mathbf a_{i}-\mathbf a_{d+1-\ca})})
$ 
 cubes of side length $2^{\ell {\mathbf a}_{d+1-\ceil\alpha}}r$.
Thus, applying \eqref{measure} to each of these cubes,  we see that 
\begin{align*}
& \mu_\ell (B(x,r)) \lesssim 2^{ -\ell\kappa(\mathbf a, \alpha)}
\int \chi_{B(x,r)}(\mathfrak D_{2^{-\ell}}^\mathbf a y) d\mu(y) \lesssim  2^{ -\ell\kappa(\mathbf a, \alpha)}\mu(\mathcal R)
\\
&\lesssim  2^{ -\ell\kappa(\mathbf a, \alpha)} 2^{\ell((1-\ca)\mathbf a_{d+1-\ca}
+\sum_{i=d+2-\ca}^d \mathbf a_i)}
(2^{\ell\mathbf  a_{d+1-\ca}}r)^{\alpha}
\lesssim r^{\alpha}.
\end{align*}
Therefore $\mu_\ell$ satisfies \eqref{measure}.
Also we consider  $\gamma_\ell$ and $f_\ell$ which are defined   by 
\[\gamma_\ell(t):=\mathfrak D_{2^{\ell}}^{\mathbf a} \gamma(2^{-\ell}t)=(t^{\mathbf a_1}\varphi_1(2^{-\ell}t),\dots, t^{\mathbf a_d}\varphi_d(2^{-\ell}t)),   \ \  f_\ell(t) =2^{-\ell}f(2^{-\ell}t),\]
respectively.
Then, by scaling $t \rightarrow 2^{-\ell}t$ we have that  
\begin{align} \nonumber
\|T_\ell f \|_{L^{q}(d\mu)}^q
& = \int \Big|\int_{[1/2,1]} e^{i\lambda \mathfrak D_{2^{-\ell}}^{\mathbf a} x \cdot \gamma_\ell(t) } f_\ell(t) \,dt \Big|^q \,d\mu(x) \\
&=2^{\ell\kappa(\mathbf a, \alpha)}
\int \Big|\int_{[1/2,1]} e^{i\lambda x \cdot 
\gamma_\ell (t)}f_\ell (t)\,dt \Big|^q \,d\mu_\ell (x) \label{tkfq}.
\end{align}
 We now use the following to get a bound for each  $T_\ell$, which is a special case of Theorem 1.1 in  \cite{HamLee}.

\begin{thm}[Theorem 1.1 in \cite{HamLee}]
\label{hamlee11}
Let $d \ge 3$ and  $\alpha\in [d-1,d]$. 
Suppose that $\gamma$ satisfies \eqref{nonv} and $\mu$ is $\alpha$--dimensional. 
Then, for $p,q$ satisfying 
\begin{equation}
\label{qpalpha}
	1/p+\beta(\alpha)/q<1, \quad q >\beta(\alpha)+1,
\end{equation}
we have the estimate 
$	\|T_\lambda^\gamma f\|_{L^{q}(d \mu)}
\lesssim 
	\lambda^{-\alpha/q}\|f\|_{L^{p}(I)}$. 
\end{thm}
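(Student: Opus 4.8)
The plan is to reconstruct the proof of this estimate (the nondegenerate case of the main theorem of \cite{HamLee}) following the Drury--Christ--Bak--Oberlin--Seeger circle of ideas, organised so that the \emph{only} property of $\mu$ that ever enters is the upper bound \eqref{measure}; throughout $d\ge 3$.

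\emph{Step 1 (reductions).} By the dilation $x\mapsto\lambda x$ the asserted inequality is equivalent to the scale-one inequality $\|T_1^\gamma f\|_{L^q(d\nu)}\lesssim C_\nu^{1/q}\|f\|_{L^p(I)}$, required uniformly over all compactly supported positive measures $\nu$, where $C_\nu$ is the best constant in \eqref{measure}; this is the formulation that is stable under the anisotropic dilations $\mathfrak D_h^{\mathbf a}$ used below. Next, using \eqref{nonv}, a partition of unity, and an affine change of variables in $x$ (which alters $C_\mu$ only by a fixed factor), one reduces to $\gamma$ being a $C^{d+1}$-small perturbation of the moment curve $(t,t^2/2!,\dots,t^d/d!)$ on a fixed short interval. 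Finally, by duality and interpolation with the trivial bound $L^\infty(I)\to L^\infty(d\mu)$ (note that $p=\infty$ lies in the range \eqref{qpalpha}), it suffices to prove a restricted-weak-type bound at the corner $p=q=\beta(\alpha)+1$ together with the strong bound along the critical line $1/p+\beta(\alpha)/q=1$, $q>\beta(\alpha)+1$.

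\emph{Step 2 (why curved plates, not cubes).} A crude version already gives the answer in too small a range: since $T_\lambda^\gamma f$ is essentially constant on cubes $Q$ of side $\lambda^{-1}$ and $\mu(Q)\lesssim\lambda^{-\alpha}$ by \eqref{measure}, summing over cubes and invoking the classical estimate \eqref{rest} of \cite{Drury} yields $\|T_\lambda^\gamma f\|_{L^q(d\mu)}\lesssim\lambda^{-\alpha/q}\|f\|_p$ whenever $q>\beta(d)+1$ and $1/p+\beta(d)/q<1$. To enlarge $\beta(d)$ to $\beta(\alpha)$ one must measure $\mu$ not along round cubes but along the anisotropic \emph{plates} dictated by the curvature. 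Decompose $I$ into arcs $J_\nu$ of a common length $\delta$ to be chosen; on $J_\nu$ with centre $c_\nu$, Taylor-expand $x\cdot\gamma(t)$ and use that $\gamma^{(1)}(c_\nu),\dots,\gamma^{(d)}(c_\nu)$ is a basis, so that after the linear change $x\mapsto v=(\lambda\delta\,x\!\cdot\!\gamma'(c_\nu),\dots,\lambda\delta^d\,x\!\cdot\!\gamma^{(d)}(c_\nu))$ and the rescaling $t\mapsto(t-c_\nu)/\delta$ the piece $T_\lambda^{\gamma|_{J_\nu}}f$ becomes $\delta$ times a fixed unit-scale extension operator $\mathcal E$ for the moment curve. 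Now combine (i) the bounds of Drury and of Bak--Oberlin--Seeger \cite{BOS09} for $\mathcal E$ -- or, at unit scale, merely $\|\mathcal E g\|_\infty\le\|g\|_1$ -- which, after undoing the linear change, produces the factor $\mu(\mathcal P_\nu)$ for the curved plate $\mathcal P_\nu$, a box with side lengths $(\lambda\delta)^{-1},\dots,(\lambda\delta^d)^{-1}$; with (ii) \eqref{measure}, applied by tiling $\mathcal P_\nu$ with cubes of side equal to its \emph{shortest} side. The count of such cubes is the source of the combinatorial exponent, and choosing $\delta$ so that precisely $\ceil\alpha$ of the side lengths of $\mathcal P_\nu$ are ``$\mu$-thick'' is exactly what produces $\beta(\alpha)$ (with its ceiling) in place of $\beta(d)$.

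\emph{Step 3 (assembly and main obstacle).} The pieces $T_\lambda^{\gamma|_{J_\nu}}f$ are summed through an almost-orthogonality: after a further dyadic refinement of the plates one arranges bounded overlap within each dyadic annulus, or, alternatively, runs a $TT^{*}$ argument whose off-diagonal gain comes from van der Corput applied to $x\cdot(\gamma(t)-\gamma(s))$ and is again integrated against $\mu$ via \eqref{measure}. Feeding in a Christ-type combinatorial count at the critical exponent yields the restricted-weak-type bound at $p=q=\beta(\alpha)+1$, and summing the resulting geometric series in the dyadic parameter -- which converges precisely when $q>\beta(\alpha)+1$ -- gives the strong bound on the critical line; interpolation of these with the trivial $L^\infty(I)\to L^\infty(d\mu)$ bound then fills the open region \eqref{qpalpha}. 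The main difficulty is that a general $\alpha$-dimensional $\mu$ has no regularity and no lower bounds, so the usual ``bush/tube dichotomy'' arguments resting on a Frostman-type lower bound are unavailable; the whole scheme must be arranged so that $\mu$ is touched only through upper bounds of the form $\mu(\mathcal P_\nu)\lesssim(\text{shortest side})^{\alpha}\cdot(\text{cube count})$, and the delicate point is to match this cube count with the exponent arising from the summation over scales so as to land exactly on $\beta(\alpha)$ -- which is what pins down the correct choice of $\delta$ as a function of $\lambda$ and $\alpha$.
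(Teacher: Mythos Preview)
The paper does not give its own proof of Theorem~\ref{hamlee11}: the statement is imported verbatim from \cite{HamLee} (note the heading ``Theorem 1.1 in \cite{HamLee}'' and the lead-in ``We now use the following to get bound for each $T_\ell$, which is a special case of Theorem 1.1 in \cite{HamLee}''), and the only commentary is the remark that the extra constraints $d/q\le 1-1/p$, $q\ge 2d$ appearing in \cite{HamLee} are automatically satisfied in the regime $d\ge3$, $\alpha\in[d-1,d]$. So there is no proof in this paper to compare your proposal against.

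As for your sketch as a reconstruction of the \cite{HamLee} argument: the architecture you describe --- reduce to a perturbed moment curve, anisotropically rescale on each short arc via the frame $\gamma'(c_\nu),\dots,\gamma^{(d)}(c_\nu)$, apply the Drury/Christ/BOS machinery at unit scale, and feed the measure in only through \eqref{measure} by tiling the curved plates with cubes of the shortest side --- is indeed the right skeleton, and the observation that the exponent $\beta(\alpha)$ (with its ceiling) arises precisely from the cube count inside an anisotropic plate is the correct heuristic. However, as written, your Step~3 is a list of names rather than an argument: ``Christ-type combinatorial count at the critical exponent'', ``bounded overlap within each dyadic annulus'', and ``summing the resulting geometric series'' each hide the actual work, and in particular you have not said what quantity the geometric series is in, nor why its ratio is controlled by the gap $q-(\beta(\alpha)+1)$. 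Your Step~1 also overshoots: the theorem only asserts the estimate on the \emph{open} region \eqref{qpalpha}, so a restricted weak-type bound at the corner $p=q=\beta(\alpha)+1$ is not needed (and indeed is not claimed in \cite{HamLee} in this generality). If you intend this as a genuine proof rather than a plan, those gaps would need to be filled.
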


Actually the estimate is valid on a wider range of $\alpha,$ $p,$ and $q$ but  this is not relevant to our purpose.
The additional restriction 
$ d/q \le (1-1/p)$ and $q\ge  2d$ which is in Theorem 1.1 in  \cite{HamLee} is not necessary here because $d \ge 3$ and  $\alpha\in [d-1,d]$.

The bound $\|T_\lambda^\gamma\|_{p\to q}$ in Theorem \ref{hamlee11} is stable under small smooth perturbation of $\gamma$.  
Since $\gamma_\ell \in \mathfrak G^\mathbf a (\epsilon)$
(in fact, $\gamma_\ell (t)\in  \mathfrak G^{\mathbf a}(C 2^{-\ell}\epsilon)$ for some constant $C>0$),
the torsion $\tau_\ell$ of $\gamma_\ell$ at $t$ is $|\tau_\ell (t)| \sim   t^{\|\mathbf a\|_1 - d(d+1)/2} $
where the implicit constant is independent of $\ell$
(see Lemma 3.4 in \cite{HamLee}).
Thus, choosing a sufficiently small $\epsilon>0$, we see that  $\gamma_\ell$ is a small smooth perturbation of the curve  $(\frac{t^{\mathbf a_1}}{\mathbf a_1!}, \dots, \frac{t^{\mathbf a_d}}{\mathbf a_d!})$,  which is nondegenerate on  the interval $[1/2,1]$.
Recalling that $\mu_\ell $ satisfies \eqref{measure} with $\mu=\mu_\ell$, 
we apply Theorem \ref{hamlee11} to \eqref{tkfq} and obtain, for $p,q$ satisfying \eqref{qpalpha}, 
\begin{equation}\label{tkfmu}
\| T_\ell f\|_{L^q(d\mu)}^q
\lesssim 2^{\ell q \big(\frac 1p +\frac{\kappa(\mathbf a, \alpha)}q -1\big)} \lambda^{-\alpha} \|f\|_p^q.
\end{equation}

\renewcommand{\B}{\Big}

Using  this, we can get a weak type estimate for $T_\lambda^\gamma$ on the critical line $1/p+\kappa(\mathbf a, \alpha)/q =1$.  
With an integer  $N$ which is to be chosen later, we consider
\[\mu( \{x: | T_\lambda^\gamma f(x)| >\delta \} )
\le \mu\B( \B\{x: |\sum_{\ell=-\infty}^N T_\ell f(x)| >\frac\delta2 \B\} \B)
+\mu \B(\B\{x: |\sum_{\ell=N+1}^\infty T_\ell f(x)| >\frac\delta2 \B\} \B).
\]
Here we trivially extend $T_\ell$ to $\ell =-1,-2,\dots$ by setting $T_\ell =0$.
{Now, fixing $p,q$ satisfying $ 1/p + \kappa(\mathbf a, \alpha)/q =1$ 
and $q> \beta(\alpha)+1$, we show the estimate \eqref{finitetype}.} We choose $1 \le  q_1, q_2 \le \infty$ such that \eqref{qpalpha} holds {with $(p,q)=(p, q_i)$, $i=1,2$, }
$1/p+\kappa(\mathbf a, \alpha)/q_1 >1 $, 
and $1/p+\kappa(\mathbf a, \alpha)/q_2 <1 $.  Such choices are possible since $\kappa(\mathbf a,\alpha)> \beta(\alpha)$. 
Since $1/p+\kappa(\mathbf a, \alpha)/q_2 -1<0<1/p+\kappa(\mathbf a, \alpha)/q_1 -1 $, 
by Chebyshev's inequality and Minkowski's inequality, and then  making use of \eqref{tkfmu}, we have
\begin{align*}
&\mu( \{x: | T_\lambda^\gamma f(x)| >\delta \} )\lesssim \delta^{-q_1} \big( \sum_{\ell=-\infty}^N \| T_\ell f \|_{L^{q_1}(d\mu)} \big)^{q_1}
+\delta^{-q_2} \big( \sum_{\ell=N+1}^\infty \| T_\ell f \|_{L^{q_2}(d\mu)} \big)^{q_2} 
\\
\lesssim &\delta^{-q_1} \big( 2^{N(1/p+\kappa(\mathbf a, \alpha)/q_1 -1)}
\lambda^{-\alpha/q_1} \| f\|_{p}  \big)^{q_1}
+
\delta^{-q_2}  \big( 2^{N(1/p+\kappa(\mathbf a, \alpha)/q_2 -1)}
\lambda^{-\alpha/q_2} \| f\|_{p}  \big)^{q_2}.
\end{align*}
Taking $N$ such that $2^N \sim \delta^{-p'} \|f\|_{p}^{p'}$, we get 
$ \mu( \{x: | T_\lambda^\gamma f(x)| >\delta \} )\lesssim  \delta^{-q}\lambda^{-\alpha}\|f\|_p^q$ for  $p,q$  satisfying 
$1/p+\kappa(\mathbf a, \alpha)/q =1$,  and hence $T_\lambda^\gamma$ is of weak type $(p,q)$.
By real interpolation along the resulting estimates  and H\"{o}lder's inequality, we get
\eqref{finitetype} for $1/p+\kappa(\mathbf a, \alpha)/q \le  1$.

Now we show that  the condition $1/p+\kappa(\mathbf a, \alpha)/q \le 1$
is necessary
for \eqref{finitetype}.
Let us consider the measure $d\mu$ which is defined by 
\[
d\mu(x) =\chi_{B(0,1)}\prod_{i=1}^{d-\ceil \alpha} d\delta(x_i) |x_{d-\ca+1}|^{\alpha-\ceil \alpha}
dx_{d-\ca+1}dx_{d-\ca+2}\dots dx_d
\]
Here $d\delta$ is the one dimensional Dirac measure. 
It is easy to check that $\mu$ satisfies \eqref{measure}.
If we take $f(t) = \chi_{[0,\lambda^{-\rho}]}(t)$ for some $\rho>0$,
then $|T_\lambda^\gamma f(x)| \ge C \lambda^{-\rho}$
whenever $x \in \mathcal R_{\mathbf a}=\{ x\in \mathbb R^d :
|x_i|\le c \lambda^{-1+\rho \mathbf a_i} \}$ for a small $c>0$.
Since $\|f\|_{p,q} \sim \|f\|_p=\lambda^{-\rho/p}$
and 
\[
\mu(\mathcal R_{\mathbf a}) \sim 
(\lambda^{-1 + \rho \mathbf a_{d-\ceil\alpha+1}})^{\alpha -\ceil\alpha+1} \lambda^{1-\ceil\alpha+\rho \sum_{i=d-\ceil\alpha+2}^d \mathbf a_i} = \lambda^{-\alpha + \rho\kappa(\mathbf a, \alpha)},
\]
the estimate \eqref{finitetype} implies 
$
\lambda^{-\rho} \big(
\lambda^{-\alpha} \lambda^{\rho \kappa(\mathbf a, \alpha)}\big)^{1/q}
\le C \lambda^{-\alpha/q} \lambda^{-\rho/p}.
$
Taking $\lambda$ which  tends to $\infty$ gives the desired condition   $1/p+\kappa(\mathbf a, \alpha)/q \le 1$. 
\end{proof}

\begin{proof}[Proof of \eqref{final}] To begin with  we recall that $\gamma_{t_\ell}^{u_\ell}\in \mathfrak G^{\mathbf a(t_\ell)}(\epsilon)$.
It is obvious that $\sigma_\ell$ satisfies \eqref{measure} with $\alpha=\ceil\alpha = d-1$. So, we have $\beta(d-1) = d(d+1)/2-1$ and $\kappa(\mathbf a(t_\ell),d-1) =\| \mathbf a(t_\ell)\|_1 - \mathbf a_{1}(t_\ell)$.  We consider the two cases: $( \mathbf A)$ $\|\mathbf a(t_\ell)\|_1 -d(d+1)/2\ge1$ and $(\mathbf B)$ $\|\mathbf a(t_\ell)\|_1 -d(d+1)/2<1$, separately. 

If $\|\mathbf a(t_\ell)\|_1 -d(d+1)/2\ge1$,  applying Proposition \ref{AA}, we obtain  \eqref{final}
for $q > d(d+1)/2$ and $1/p + (\|\mathbf a(t_\ell)\|_1 - \mathbf a_{1}(t_\ell))/q \le 1$.  If  $\|\mathbf a(t_\ell)\|_1 -d(d+1)/2<1$,
then $ \|\mathbf a(t_\ell)\|_1 =d(d+1)/2$, i.e., $\mathbf a(t_\ell)=(1,2,\dots, d)$.  Thus, the curve $\gamma_{t_\ell}^{u_\ell}$ is now nondegenerate, that is to say, $\gamma_{t_\ell}^{u_\ell}$ satisfies \eqref{nonv} with $\gamma={\gamma_{t_\ell}^{u_\ell}}$. 
Regarding  this case,  we may directly apply Theorem \ref{hamlee11} to get the strong type $L^p -L^q(d\sigma_\ell)$ estimate for $T_\lambda{}^{\gamma_{t_\ell}^{u_\ell}} $ provided that $q > d(d+1)/2$ and $1/p + (d(d+1)-2)/(2q )< 1$.   Now we note that $(d(d+1)-2)/2< \max_\ell \{ \|\mathbf a(t_\ell)\|_1 - \mathbf a_{1}(t_\ell)\} $ because $ \|\mathbf a(t_0)\|_1- d(d+1)/2 \ge 1$ for some $t_0\in I$.  
This is clear since $\mathbf a_i(t_\ell)\ge i$, $i=1,\dots,d$, and $\mathbf a_i(t_\ell)>i_0$ for some $1\le i_0\le d$. Therefore, combining  the estimates for the cases $(\mathbf A)$ and $(\mathbf B)$  we get \eqref{final} whenever  $q >d(d+1)/2$ and $1/p +  \max_\ell \{ \|\mathbf a(t_\ell)\|_1 - \mathbf a_{1}(t_\ell)\}/q \le 1$.  
This completes the proof. 
\end{proof}
 
This shows the sufficiency part Theorem \ref{rmkk2} and we now turn to proof of the necessity part of Theorem \ref{rmkk2}, which is slightly more involved since we need to deal with higher order derivatives.
 
\begin{proof}[Proof of the necessity part of Theorem \ref{rmkk2}]
We show the condition $1/p +  \max_{t\in I}\{ \|\mathbf a(t)\|_1 - \mathbf a_{1}(t)\}/q \le 1$  is necessary for \eqref{fnt}.  Let $t_0\in I$ be  the point where $\gamma$ is of type $\mathbf a$ 
at $t_0$ and $ \|\mathbf a\|_1 - \mathbf a_{1}=\max_{t\in I}\{ \|\mathbf a(t)\|_1 - \mathbf a_{1}(t)\}.$ 
It suffices to show that \eqref{fnt} implies $1/p+(\|\mathbf a\|_1-\mathbf a_1)/q \le 1$ provided $f$ is supported in $[t_0,t_0\pm \epsilon_0]^*\subset I$.  
We only  consider the case $[t_0,t_0+ \epsilon_0]\subset I,$ and the other case can be handled similarly. 
From Taylor's expansion we have (see \cite[Section 3]{HamLee})  that, for $t\in [0,\epsilon_0]$, 
\begin{align}
\label{eeqq}
	\gamma(t+t_0)&-\gamma(t_0)
=
	\gamma^{(\mathbf a_1)}(t_0)\frac{t^{\mathbf a_1}}{\mathbf a_1!}\big(1 +O(t)\big)+\\
&\qquad 
	\gamma^{(\mathbf a_2)}(t_0)\frac{t^{\mathbf a_2}}{\mathbf a_2!}\big(1 +O(t)\big)+ \dots+\gamma^{(\mathbf a_d)}(t_0) \frac{t^{\mathbf a_d}}{\mathbf a_d!}\big(1 +O(t)\big).\nonumber
\end{align}
Since $\gamma^{(\mathbf a_1)}(t_0),\dots, \gamma^{(\mathbf a_d)}(t_0)$ are linearly independent, we can  
choose orthonormal vectors $\mathbf v_1,\dots, \mathbf v_{d-1}$ one after another  such that, for $i=1, \dots, d-1$, 
\[              
	\mathbf v_i \perp  \spn{\gamma^{(\mathbf a_1)}(t_0), \dots,  \gamma^{(\mathbf a_{d-i})}(t_0) },  \   \mathbf v_{i+1}   \in\spn{\gamma^{(\mathbf a_1)}(t_0), \dots,  \gamma^{(\mathbf a_{d-i})}(t_0) } . 
\]  
Additionally, let $\mathbf v_d$ be the unit vector such that $\mathbf v_d\perp \spnn{\mathbf v_1,\dots, \mathbf v_{d-1}}$. 
For  $\mathbf y=(\mb y_1,\dots, \mb y_{d-1})$  we parametrize  the part of $\mathbb S^{d-1}$ near $-\mathbf v_d$
by $\mb y \in \mathbb R^{d-1} \mapsto \mb y_1\mathbf v_1+\dots +\mb y_{d-1}\mathbf v_{d-1}+ (\phi(\mb y)-1)\mathbf v_d$ such that  $\phi(0)=\nabla\phi(0) = 0$. In fact, $\phi(\mb y)=1-\sqrt{1-|{\mb y}|^2}$.  Thus, the measure on $\mathbb S^{d-1}$ is given by $d\mu = (1 + |\nabla \phi(\mb y)|^2)^{1/2} d\mb y$. 

For some small enough $\epsilon_0>0$ let us set
\[ \overline T_\lambda  f(\mb y)=  \chi_{B(0,\epsilon_0)}(\mb y)\int e^{i\lambda \Phi(\mb y, t)}  f(t) \chi_{[0, \epsilon_0]}(t)dt, \] 
where $\Phi(\mb y,t)=(\sum_{i=1}^{d-1} \mathbf y_i \mathbf v_i+(\phi(\mb y)-1)\mathbf v_d)\cdot  (\gamma(t+t_0)-\gamma(t_0))$.
Subtracting harmless factors,  it is sufficient  to consider, instead of $T_\lambda^{\gamma}$, the operator  $\overline T_\lambda$
and to show the estimate
\begin{equation}
\label{eqq}
\|\overline T_\lambda  f\|_{L^q} \le C\lambda^{-\frac{d-1}q}\|f\|_{L^{p,q}}
\end{equation}
 implies $1/p+(\|\mathbf a\|_1-\mathbf a_1)/q \le 1$.   With a small enough $c>0$ and $0<\rho<(2\mathbf a_d-\mathbf a_1)^{-1}$ let us set
 \[
\mathcal R_{\mathbf a}
=\{ \mb y \in \mathbb R^{d-1} : |\mb y_i| \le c\lambda^{-1+\rho\mathbf a_{d+1-i} }, 1\le i \le d-1 \}.\]
We now recall that $a_1<\cdots<a_d$.  
By the choice of $\mathbf v_1,\dots, \mathbf v_{d-1}$ and  using \eqref{eeqq}  and  $\phi(\mb y)=O(|\mb y|^2)$ we notice that, for $t\in [0,\lambda^{-\rho}]$ and $\mb y\in \mathcal R_{\mathbf a}$, 
\begin{align*}
\Phi(\mb y,t)
&= 
	\sum_{i=1}^{d-1} \mb  y_i \mb v_i\cdot  \Big( \sum_{j=d+1-i}^d \gamma^{(\mathbf a_j)}(t_0)\frac{t^{\mathbf a_j}}{\mathbf a_j!}\big(1 +O(t)\big)\Big)  +O(|\mb y|^2 |t|^{\mb a_1})\\
&= 
	\sum_{i=1}^{d-1} O(|\mb  y_i||t|^{\mathbf a_{d+1-i}}) + O(|\mb y|^2 |t|^{\mb a_1})=O(c\lambda^{-1}). 
\end{align*} 
Taking sufficiently small $c>0$, we have 
$|\Phi(\mb y,t)|\le 10^{-2}\lambda^{-1}$ if $\mb y\in \mathcal R_{\mathbf a}$ and $t\in [0,\lambda^{-\rho}]$. Therefore, if we take  $f = \chi_{[0,\lambda^{-\rho}]}$ with a large $\lambda$, we see that 
$|\overline  T_\lambda  f |\gtrsim  \lambda^{-\rho}$ on $\mathcal R_{\mathbf a}$.  Since $|\mathcal R_{\mathbf a}| \sim \lambda^{-(d-1)+ \rho (\|\mathbf a\|_1-\mathbf a_1)}$, the estimate \eqref{eqq} implies 
\[
\lambda^{-\rho} \lambda^{-\frac{d-1}q+(\|\mathbf a\|_1-\mathbf a_1)\frac\rho q}
\le C  \lambda^{-\frac{d-1}q} \lambda^{-\frac\rho p}.
\]
Thus, taking $\lambda\to \infty$ we see that  $1/p+(\|\mathbf a\|_1-\mathbf a_1)/q \le 1$ is necessary for \eqref{eqq}.  This completes the proof. 
\end{proof}

We prove Proposition \ref{hyper} by making use of Proposition \ref{AA}.

\begin{proof}[Proof of Proposition \ref{hyper}]
We may assume $H  = \{ x \in \mathbb R^d : x\cdot \mathbf n =0\}$ for a nonzero vector $\mathbf n=(n_1, \dots, n_d)$. 
We may assume that $n_k\neq 0$ for some $k$.\footnote{The choice of $k$ is not important since the type of curve does not change under affine transformation.}
Then $H$ is parametrized by $x_k = \mathbf h \cdot \overline x$, where each element $h_i$ of $\mathbf h$ is given by $h_i = -n_{i}/n_k$, $1 \le i \neq k \le d$ and $\overline x =  (x_1,\dots,\widetilde{x_k},\dots,x_d) \in \mathbb R^{d-1}$. 
Here $\widetilde{x_k}$  means the omission of the $k$--th element $x_k$.

Hence, we have  
\begin{equation}
\label{phase}
	x\cdot \gamma(t)
=  
	\overline x \cdot \gamma_{\mathbf h}(t), \   \  \gamma_{\mathbf h}(t) 
:= 
	\overline{\gamma(t)} +  \gamma_k(t)  {\mathbf h}.
\end{equation}

To prove the sufficiency part of Proposition \ref{hyper}, it suffices to show that
\begin{equation}\label{TH1}
 \| T_{\lambda}^{\gamma_\mathbf h} f\|_{L^q(\mathbb R^{d-1})}\lesssim \lambda^{-(d-1)/q} \|f\|_{L^{p,q}(I)}     
\end{equation}
holds 
if $q>\frac{d(d-1)}2+1$ and $ 1/ p+   \max_{t\in I} \omega( t)/q \le 1$.

For $t\in I$, we may assume that $\omega(t)$ attains its minimum at $\mathbf a(t)=
(\mathbf a_1(t), \dots, \mathbf a_{d-1}(t))$.
Then we have 
\begin{align*}
  \det \begin{bmatrix}   \gamma_\mathbf h^{(\mathbf a_1)}(t),&  
&\cdots,& \gamma_\mathbf h^{(\mathbf a_{d-1})}(t)
\end{bmatrix}
=
\frac{(-1)^{k+1}}{n_k}
 \det \begin{bmatrix} \mathbf n,& \gamma^{(\mathbf a_1)}(t),
&\cdots,& \gamma^{(\mathbf a_{d-1})}(t)
\end{bmatrix} 
\neq 0,
\end{align*}
where we make use of elementary row operations  since $n_k \neq 0$.
Hence $\gamma_{\mathbf h}$ is of type $\mb a(t)= (\mathbf a_1,\dots,\mathbf a_{d-1}) $.

As in the proof of Theorem \ref{rmkk2}, we may work with a function $f$ supported in sufficiently a small neighborhood of $t$. 
Thus, if  $\|\mathbf a(t)\|_1 > d(d-1)/2$,  we note that $\kappa(\mathbf a(t), d-1) = \| \mathbf a(t)\|_1$ and  $\beta(d-1) = d(d-1)/2$.  Hence, by Proposition \ref{AA} with $\mu$ which is the Lebesgue measure in $\mathbb R^{d-1}$,  we get \eqref{TH1}  for $q > d(d-1)/2 +1$ and
 $1/p + \|\mathbf a(t)\|_1/q \le 1$ provided $f$ is supported in a small neighborhood of $t$.  On the other hand, if $\|\mathbf a(t)\|_1 = d(d-1)/2$, the curve $\gamma_{\mb h}$ is a nondegenerate (in $\mathbb R^{d-1}$) near the point $t$.   
In this case,  the desired estimate follows by the typical Fourier restriction estimates for nondegenerate curves in $\mathbb R^{d-1}$ (see \cite{Drury, BakLee, BOS09} for example). Thus we get \eqref{TH1} for $q > d(d-1)/2 +1$ and $1/p + d(d-1)/2q \le 1$ whenever $f$ is supported near the point $t$.  Since $I$ is compact, combining those two types of local results we obtain \eqref{TH1} for 
$q > d(d-1)/2 +1$ and $1/p + \max_{t\in I}\|\mathbf a(t)\|_1 / q \le 1$.

This range is optimal because  the conditions $q > d(d-1)/2 +1$ and $1/p + \max_{t\in I}\|\mathbf a(t)\|_1 / q \le 1$ are necessary for \eqref{TH1}.  The first one is obvious because we can not have \eqref{TH1} for  $q \le  d(d-1)/2 +1$ in $\mathbb R^{d-1}$ even for the nondegenerate curve as is mentioned in the introduction.  The necessity of the second condition can be shown by following the argument in the proof of  the necessity part of Theorem \ref{rmkk2}.    
\end{proof}

\begin{rmk} The projection of a nondegenerate polynomial curve in $\mathbb R^{d}$ to  $(d-1)$--dimensional hyperplane can be seen as a degenerate polynomial curve in $\mathbb R^{d-1}$. So, Proposition \ref{hyper} also can be deduced from the
Fourier restriction theorem for polynomial curves with affine arclength measure (see \cite{Sjolin, Sogge, Christ, DM85, DM87, BOS08, Stovall}).
\end{rmk}

\section{Details on Remarks}

\subsection{Proof of Remark \ref{k-dim}}
Let $S$ be a $k$--dimensional surface in $\mathbb R^d$.
Also let $\gamma(t) = (P_1(t), \dots, P_d(t) )$ for polynomials $P_i$ of degree $i$.
Thus, $\gamma$ satisfies \eqref{nonv}.
For $l=d-k$, we parametrize $S$ by $y=(y_1,\dots,y_{k}) \mapsto (\phi_1(y),\dots,\phi_{l}(y),y)$.
We intend to find $\phi_1, \dots,\phi_l$ such that
the phase function $\psi(y,t)=(\phi_1(y),\dots,\phi_{l}(y),y)\cdot \gamma(t)$
satisfies 
\begin{equation}
\label{abcd}
\partial_t \nabla_y \psi(y,t)=\dots=\partial_t^{l} \nabla_y \psi(y,t)=0,
\end{equation}
	and 
\begin{equation}
\label{ABCD}
\det(\partial_t^{l+1} \nabla_y \psi,\dots,\partial_t^{d}\nabla_y \psi) (y, t) \neq 0
\end{equation}
when $y=g(t)$ for some $g(t)$.
	
	Let us write $\gamma=(\gamma_a,\gamma_b) \in \mathbb R^{l} \times \mathbb R^{k}$ and  we set 
	\begin{align*}
	A_1(t)=(\gamma_a',\dots,\gamma_a^{(l)})(t), \  A_2(t)=(\gamma_a^{(l+1)},\dots,\gamma_a^{(d)})(t),\\
	B_1(t)=(\gamma_b',\dots,\gamma_b^{(l)})(t), \  B_2(t)=(\gamma_b^{(l+1)},\dots,\gamma_b^{(d)})(t).
	\end{align*}
	Since 
	$\gamma$ is nondegenerate, 
	by changing coordinates  we may assume that $A_1(t)$ is invertible. 
	Now we note that  \begin{equation}
	\label{psipsi} \psi(y,t)=(\phi_1,\dots,\phi_{l})\cdot \gamma_a(t) + y\cdot \gamma_b(t)
	\end{equation}
	and
	\begin{equation}\label{AB}
	(\partial_t \nabla \psi,\dots,\partial_t^{l} \nabla \psi)(g(t),t)
	=(\nabla_y \phi_1,\dots,\nabla_y\phi_{l})(g(t))A_1(t)
	+B_1(t).
	\end{equation}
	Thus \eqref{abcd} follows if 
	\begin{equation}\label{ABC}
	(\nabla_y \phi_1,\dots,\nabla_y\phi_{l})(g(t))=
	-B_1(t) A_1^{-1}(t)
	\end{equation}
	To obtain $\phi_1, \dots,\phi_l$ satisfying 
	 \eqref{ABC} for some $g$, we simply take
	$g(t)=(t,\dots,t)$ and   set
	\[  \phi_j(y_1,\dots,y_k)=\sum_{i=1}^k \int_{0}^{y_i} \big[-B_1(t) A_1^{-1}(t)\big]_{ij} dt,
	\] 
where $[M]_{ij} $ denotes the $(i,j)$--th element of the matrix $M$.
	 Then  \eqref{abcd} clearly holds.  

Now we  show that  \eqref{ABCD} holds with our choices of $\phi_1, \dots,\phi_l$ and $g$.
From \eqref{psipsi} it follows that 
	$(\partial_t^{l+1} \nabla_y \psi,\dots,\partial_t^{d}\nabla_y \psi)(g(t),t)
	=(\nabla_y \phi_1,\dots, \nabla_y \phi_{l})(g(t))A_2(t)+B_2(t)$. Hence, using 
	\eqref{ABC}, we see that 
	\[ (\partial_t^{l+1} \nabla_y \psi,\dots,\partial_t^{d}\nabla_y \psi)(g(t),t) = B_2(t)-B_1(t) A_1^{-1}(t)A_2(t).\] 
	We recall the identity concerning the determinant of block matrix 
	\begin{align*}
	\det \begin{pmatrix}
	B_2(t) & B_1(t) \\
	A_2(t) & A_1(t)
	\end{pmatrix}
	=\det\Big(B_2(t)-B_1(t) A_1^{-1}(t)A_2(t)\Big) \det A_1(t).
	\end{align*}
	Since $\gamma$ is nondegenerate,  the determinant in the left-hand side is nonzero. Recall $A_1(t)$ is invertible and therefore \eqref{ABCD} holds. 
	
Once we have \eqref{abcd} and \eqref{ABCD} for some $g$, we can repeat the same argument as in the proof of Proposition \ref{variable2}. 
In fact, as before we partition  $I=[0,1]$ such that $I= \cup_m I_m $ and  $I_m=[t_m,t_{m+1}]$	of length $\sim \lambda^{-1/(2d)}$.
	Let  $\mathcal M(t_m)$ be the $k \times k$ matrix
	whose $j$--th column vector is $\partial_t^{d-k+j} \nabla_y \psi(g(t_m),t_m)$.
	For the rectangle $\overline{\mathcal R}$ which is given by
	\[
	\overline{\mathcal R} = \{ (x_{d-k+1},\dots,x_d) \in \mathbb R^k :
	 |x_{j} | \le c\lambda^{-1+j\rho},\,\,
	d-k+1 \le j \le d \},
	\]
	we consider the parallelepiped defined by 
	\[
	\overline{\mathcal P}_m =\{ y \in \mathbb R^k : \mathcal M^T(t_m)(y-g(t_m)) \in \overline{\mathcal R}\}.
	\]
	By the same argument as in the proof of Lemma \ref{const} to $\overline{\mathcal P}_m$
	(instead of $\mathcal P_k$), one can easily see 
	$|\psi(y,t)| \le \lambda^{-1}$ whenever $y \in \overline{\mathcal P}_m$
	and $t \in I_m$.  
Then, we repeat the argument in the proof of Proposition \ref{variable2}. 
	The only difference is that the size of ${\mathcal P_k}$
	 is now replaced by  $|\overline{\mathcal P}_m| =\lambda^{-k+
	(\frac{d^2+d}2-\frac{(d-k)^2+(d-k)}2) \frac 1 {2d}}$.
	Using this for \eqref{sum}, we see that the estimate 
	$ \|T_\lambda^\gamma f\|_{L^q(S)} \le C \lambda^{-\frac{k}{q}} \| f\|_{L^p(I)}$ implies that 
\[
\lambda^{-\frac{q}{2d}}\sum_{m}^{\sim \lambda^{1/2d}}  |\overline{\mathcal P}_m|
\lesssim \lambda^{- k} \|f\|_{L^p(I)}^q.
\]
This yields $
 \lambda^{-\frac{q}{2d}}  \lambda^{\frac{1}{2d}} 
  \lambda^{-k+\frac{1}{2d}(\frac{d^2+d}{2}-\frac{(d-k)^2+(d-k)}2)}
  \lesssim \lambda^{-k}.
$
Hence, by letting $\lambda\to \infty$  it follows that the condition $q \ge (2d-k+1)k/2+1$ is necessary.  
\qed

\subsection{Failure of $L^{p,1}(\mathbb S^{d-1})- L^{2d/(d-1),\infty}(\mathbb R^d)$ for $\widehat{fd\sigma}$}
We now shows 
the failure of $L^{p,1}(\mathbb S^{d-1})- L^{2d/(d-1),\infty}(\mathbb R^d)$ of $f \mapsto \widehat{fd\sigma}$ for any $p > 2d/(d-1)$.
This improves results in \cite{Beckner4} where the estimate  $L^{p,1}(\mathbb S^{d-1})-L^{2d/(d-1),\infty}(\mathbb R^d)$, $p= 2d/(d-1)$ fails. 	
 		 
We take  a small $\delta>0$ and decompose $\mathbb S^{d-1}$ into spherical caps $U_j$ of diameter $\delta$.
Let $\mathcal T_j$ be the tube centered at $0$ which is dual to $U_j$ with the short axes of size $c\delta^{-1}$ and the long axis of size $c\delta^{-2}$ for a sufficiently small $c>0$.
We denote by $\mathcal T_j+a_j$ the translation of $\mathcal T_j$
by $a_j \in \mathbb R^d$.
		
The following lemma is
the Kakeya set construction appeared in \cite[Lemma 3]{Beckner4}.
		
\begin{lem}
\label{log}
Let $0<\delta \ll 1$, and let $U_j$ and $\mathcal T_j$ are given as above.
Then there exists $\{a_j\}_{ 1 \le j \lesssim \delta^{-(d-1)}}$ satisfying
\[
|\bigcup_j (\mathcal T_j+a_j) |
\lesssim 
\frac{\log\log 1/\delta}{\log 1/\delta}	\sum_j | \mathcal T_j+a_j|.
\]
\end{lem}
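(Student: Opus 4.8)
\emph{Reduction by scaling.} Both sides of the asserted inequality transform identically under a simultaneous dilation of all the $\mathcal T_j$ and $a_j$, and $|\mathcal T_j+a_j|=|\mathcal T_j|$, so after dilating by $\delta$ we may assume each $\mathcal T_j$ has unit cross‑section and length $N:=\lfloor\delta^{-1}\rfloor$ and points in the direction $\omega_j$ of a maximal $\delta$‑separated subset $\{\omega_j\}\subset\mathbb S^{d-1}$; thus there are $\sim\delta^{-(d-1)}\sim N^{d-1}$ of them and $\sum_j|\mathcal T_j|\sim N^{d}$. Since $\log\log(1/\delta)=\log\log N$ and $\log(1/\delta)=\log N$, it suffices to produce translations $\{a_j\}$ with $\big|\bigcup_j(\mathcal T_j+a_j)\big|\lesssim \tfrac{\log\log N}{\log N}\,N^{d}$.

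\emph{The planar core.} For $d=2$ this is exactly the setting of the classical Perron‑tree (Besicovitch) construction: $\sim N$ tubes of dimensions $1\times N$ whose slopes form a $\tfrac1N$‑separated set in $[0,1]$. One partitions the fan of these tubes into $2^n$ congruent triangular pieces with $2^n\sim N$, hence $n\sim\log N$, and translates sibling pieces in $n$ successive stages so that at each stage the area of the sprouted figure is, up to a constant, halved — the overlap at each stage being controlled by the elementary $L^2$ estimate for a union of such triangles. The resulting figure has area $\lesssim \tfrac1n(\text{area of the initial fan})$ plus the area of the protruding triangular stems, the stems contributing $\lesssim\tfrac{\log n}{n}(\text{initial area})$; since the initial area is $\sim N^2$, $n\sim\log N$ and $\log n\sim\log\log N$, this gives $\big|\bigcup_k(\mathcal T_k+a_k)\big|\lesssim \tfrac{\log\log N}{\log N}\,N^{2}$, which is the claim for $d=2$. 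This is precisely the construction of \cite[Lemma 3]{Beckner4}, and the underlying Perron‑tree estimate is classical, so I would simply invoke it.

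\emph{Passage to $d\ge 3$.} After covering $\mathbb S^{d-1}$ by boundedly many caps we may take all $\omega_j$ near a fixed pole and write $\omega_j=(\xi_j,\eta_j,\zeta_j)\in\mathbb R\times\mathbb R^{d-2}\times\mathbb R$ with $\zeta_j\sim1$, the pairs $(\xi_j,\eta_j)$ running over a $\delta$‑net of a ball. Group the indices by the value of $\eta_j$: there are $\sim N^{d-2}$ groups, each containing $\sim N$ indices with $\tfrac1N$‑separated $\xi_j$. On a given group a volume‑preserving shear in the $(x_1,\dots,x_{d-1})$‑variables straightens the tubes so that, up to bounded constants, they form a planar fan of the two‑dimensional type lying in a unit thickening of the $(x_1,x_d)$‑plane; apply the construction of the previous paragraph there, choosing the translations $a_j$ inside that plane, and undo the shear. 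Slicing $\mathbb R^d$ along the $d-2$ transverse coordinates, each group then contributes (un‑shearing preserves volume) a set of measure $\lesssim \tfrac{\log\log N}{\log N}\,N^{2}$, and summing over the $\sim N^{d-2}$ groups yields $\big|\bigcup_j(\mathcal T_j+a_j)\big|\lesssim \tfrac{\log\log N}{\log N}\,N^{d}\sim \tfrac{\log\log N}{\log N}\sum_j|\mathcal T_j|$, as required.

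\emph{Main obstacle.} The only substantive point is the two‑dimensional Perron‑tree estimate in the second step — the bookkeeping of the overlaps across the $n\sim\log N$ stages and of the protruding stems, which is what produces the sharp $\tfrac{\log\log N}{\log N}$ ratio. Everything else (the dilation, the grouping into coplanar pencils, the shears, and the Fubini slicing) is routine affine bookkeeping.
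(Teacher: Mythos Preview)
The paper does not give a proof of this lemma; it simply introduces it with the sentence ``The following lemma is the Kakeya set construction appeared in \cite[Lemma 3]{Beckner4}'' and then uses it as a black box. So there is no argument in the paper to compare against --- you have supplied more than the paper does.

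Your sketch is sound. The scaling reduction is harmless, and for $d=2$ you are correct that this is precisely the Perron--tree construction recorded in \cite{Beckner4}. Your lift to $d\ge3$ --- grouping the directions by the transverse component $\eta_j\in\mathbb R^{d-2}$, shearing each group into a planar pencil lying in a unit slab, applying the two--dimensional construction inside that pencil with translations chosen in the $(x_1,x_d)$--plane, and then taking the crude union bound over the $\sim N^{d-2}$ groups --- is a legitimate route to the $d$--dimensional statement: since only an upper bound on $\big|\bigcup_j(\mathcal T_j+a_j)\big|$ is required, summing the per--group measures suffices, and the shears, being volume--preserving with bounded distortion, do not disturb the estimates. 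The paper bypasses all of this by citing the $d$--dimensional lemma directly from \cite{Beckner4}.
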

		
		To show the failure for $p>\frac{2d}{d-1}$, it suffices to show the case $p=\infty$ and the other case follows since $L^\infty(\mathbb S^{d-1})\subset L^{p,1}(\mathbb S^{d-1})$ for any $p<\infty$.
		Let  $q_*=\frac{2d}{d-1}$ and 
let us assume that $\| \widehat{fd\sigma} \|_{L^{q_*,\infty}} \lesssim \|f\|_{L^\infty}$. We show this lead to a contradiction.

		As in the proof of Theorem \ref{necessary2},
		for each $j$, let $\epsilon_j = \pm1$ be the random variables
		with equal probability.
Let us set $ f = \sum_j \epsilon_j f_j$ where $f_j(\xi)=\chi_{U_j}(\xi)e^{-ia_j\cdot \xi}$.	
Then by Khintchine's inequality we obtain
		\begin{align*}
		\| \sum_j |\widehat{f_jd\sigma}|^2 \|_{L^{q_*/2,\infty}}^{1/2}
		= \| \big(\sum_j |\widehat{f_jd\sigma}|^2\big)^{1/2} \|_{L^{q_*,\infty}}
		\sim\| \mathbb E (  |\sum_j \epsilon_j \widehat{f_jd\sigma} |) \|_{L^{q_*,\infty}}.
		\end{align*}
		By  Minkowski's integral inequality, it follows that
\begin{align}
\label{sum2}
	\| \sum_j |\widehat{f_jd\sigma}|^2 \|_{L^{q_*/2,\infty}}^{1/2}
\lesssim
	\mathbb E ( \| \sum_j \epsilon_j \widehat{f_jd\sigma}\|_{L^{q_*,\infty}})
\lesssim 
	\|f\|_\infty.
\end{align}
For the second inequality we use the assumption $\| \widehat{fd\sigma} \|_{L^{q_*,\infty}} \lesssim \|f\|_{L^\infty}$.		%
		Since $\widehat{f_jd\sigma}$ is essentially constant on $
		\mathcal T_j+a_j$, we note that
		\[|\widehat{f_jd\sigma} |^2 \gtrsim |U_j|^2\chi_{\mathcal T_j+a_j}\sim \delta^{2(d-1)}
		\chi_{\mathcal T_j+a_j}.\]
       Thus, it follows that
		\begin{align*} 
		\sum_{j }  |\mathcal T_j+a_j| 
		&\le \int \sum_{j }
		\chi_{\mathcal T_j+a_j} (y) dy
		\le \Big\| \sum_{j } \chi_{\mathcal T_j+a_j} \Big\|_{L^{{q_*}/ 2,\infty}} 
		\Big| \bigcup_j \mathcal T_j+a_j \Big|^{1-2/{q_*} }\\
		&\lesssim \delta^{-2(d-1)}\Big\| \sum_{j }
		\big| \widehat{f_jd\sigma} \big|^2 \Big\|_{L^{{q_*}/2,\infty}}
		\Big| \bigcup_j \mathcal T_j+a_j \Big|^{1- 2/ {q_*} }.
		\end{align*}
		Combining this with Lemma \ref{log} and \eqref{sum2},
		we obtain
		\begin{align*}
		\Big( \sum_{j=0}^\ell | \mathcal T_j+a_j| \Big)^{ 2/{q_*}}
		\lesssim 	\delta^{-2(d-1)}\Big(\frac{\log\log 1/\delta}{\log 1/\delta} \Big)^{1- 2/{q_*}}.
		\end{align*}
		Note that  
		$
		( \sum_{j=0}^\ell | \mathcal T_j+a_j| )^{ 2/{q_*}}
		\gtrsim (\delta^{-(d-1)} \delta^{-(d+1)})^{(d-1)/d}$. 
		Since $1-2/q_* = 1/d >0$, we have a contradiction as $\delta \rightarrow 0$.
		This completes the proof.

\section*{acknowledgment} 
S. Ham was supported  by NRF-2017R1C1B2002959,  
H. Ko  was supported in part  by NRF-0450-20190054, and 
S. Lee was partially supported by NRF-2018R1A2B2006298. S. Lee would like to thank Jong-Guk Bak and Andreas Seeger for the discussion on related subjects.

\bibliographystyle{plain}

\end{document}